\theoremstyle{plain}
\newtheorem{theorem}{Theorem}[section]
\newtheorem{corollary}[theorem]{Corollary}
\theoremstyle{remark}
\newtheorem{remark}[theorem]{Remark}
\theoremstyle{definition}
\newtheorem{example}[theorem]{Example}
\newtheorem{definition}[theorem]{Definition}
\numberwithin{equation}{section}
\begin{document}

\title[Galois groups and group actions on Lie algebras]{Galois groups and group actions on Lie algebras}

\author{A.L. Agore}
\thanks{The first named author was  partially supported by a grant of Ministery of Research and Innovation, CNCS - UEFISCDI, project number PN-III-P1-1.1-TE-2016-0124, within PNCDI III and is a FWO (Fonds voor Wetenschappelijk Onderzoek Flanders) fellow.}
\address{Vrije Universiteit Brussel, Pleinlaan 2, B-1050 Brussels, Belgium}
\address{Simion Stoilow Institute of Mathematics of the Romanian Academy, P.O. Box 1-764, 014700 Bucharest, Romania}
\email{ana.agore@gmail.com, ana.agore@vub.be}

\author{G. Militaru}
\address{Faculty of Mathematics and Computer Science, University of
Bucharest, Str. Academiei 14,
RO-010014 Bucharest 1, Romania}
\email{gigel.militaru@gmail.com}


\maketitle

\begin{abstract}
If $\mathfrak{g} \subseteq \mathfrak{h}$ is an extension of Lie
algebras over a field $k$ such that ${\rm dim}_k (\mathfrak{g}) =
n$ and ${\rm dim}_k (\mathfrak{h}) = n + m$, then the Galois group
${\rm Gal} \, (\mathfrak{h}/\mathfrak{g})$ is explicitly described
as a subgroup of the canonical semidirect product of groups ${\rm
GL} (m, \, k) \rtimes {\rm M}_{n\times m} (k)$. An Artin type
theorem for Lie algebras is proved: if a group $G$ whose order is
invertible in $k$ acts as automorphisms on a Lie algebra
$\mathfrak{h}$, then $\mathfrak{h}$ is isomorphic to a skew
crossed product $\mathfrak{h}^G \, \#^{\bullet} \, V$, where
$\mathfrak{h}^G$ is the subalgebra of invariants and $V$ is the
kernel of the Reynolds operator. The Galois group ${\rm Gal} \,
(\mathfrak{h}/\mathfrak{h}^G)$ is also computed, highlighting the
difference from the classical Galois theory of fields where the
corresponding group is $G$. The counterpart for Lie algebras of
Hilbert's Theorem 90 is proved and based on it the structure of
Lie algebras $\mathfrak{h}$ having a certain type of action of a
finite cyclic group is described. Radical extensions of finite
dimensional Lie algebras are introduced and it is shown that their
Galois group is solvable. Several applications and examples are
provided.
\end{abstract}

\section{Introduction}
The complete description of the automorphism group ${\rm Aut}_{\rm
Lie} (\mathfrak{h})$ of a given Lie algebra $\mathfrak{h}$ is an
old and notoriously difficult problem intimately related to the
structure of Lie algebras. One of the most important results
dealing with this problem shows that the automorphism group of a
finite-dimensional simple Lie algebra over an algebraically closed
field of characteristic zero is generated, with few exceptions, by
the invariant automorphisms \cite[Theorem 4]{jacobson}. This
allows for a full description of the automorphism group of any
finite dimensional reductive Lie algebra. Beyond the theoretical
interest in this problem, the description of the automorphism
group of an arbitrary Lie algebra turns out to be of crucial
importance for the construction of solutions to Einstein's field
equations for Bianchi geometries, in the study of
$(4+1)$-dimensional spacetimes with applications in cosmology
\cite{Chr, fisher, gray, petros} or for discrete symmetries of
differential equations. The classification of automorphism groups
for indecomposable real Lie algebras is known only up to dimension
six and it has been only recently finished \cite{fisher, gray}.
For other contributions to the subject see \cite{bavula, Han} and
the references therein. Perhaps the strongest motivation for
studying ${\rm Aut}_{\rm Lie} (\mathfrak{h})$ comes from Hilbert's
invariant theory, whose foundation was set at the level of Lie
algebras in the classical papers \cite{borel, bra, thrall, Wever}
- for more recent work on the subject we refer the reader to
\cite{bry, donkin, papistas}. An action as automorphisms of a
group $G$ on a Lie algebra $\mathfrak{h}$ is a morphism of groups
$\varphi: G \to {\rm Aut}_{\rm Lie} (\mathfrak{h})$. Particular
attention was given to the situation where $G$ is a finite
subgroup of ${\rm Aut}_{\rm Lie} (\mathfrak{h})$ with the
canonical action on $\mathfrak{h}$; in this case achieving the
description of the subgroups of ${\rm Aut}_{\rm Lie}
(\mathfrak{h})$ is the key step. If $\varphi: G \to {\rm Aut}_{\rm
Lie} (\mathfrak{h})$ is an action, then we can consider the
subalgebra of invariants $\mathfrak{h}^G$ and we obtain an
extension $\mathfrak{h}^G \subseteq \mathfrak{h}$ of Lie algebras.
The fundamental problem of invariant theory \cite{borel, hochster,
mont2, procesi}, in the setting of Lie algebras comes down to
finding under which conditions on $G$ and $\mathfrak{h}$ the
algebraic/geometric properties can be transferred between the two
Lie algebras $\mathfrak{h}^G$ and $\mathfrak{h}$. Turning to the
problem we started with, since describing the automorphism group
${\rm Aut}_{\rm Lie} (\mathfrak{h})$ of a given Lie algebra
$\mathfrak{h}$ is an extremely complicated task, it is natural to
start by considering only those automorphisms of $\mathfrak{h}$
which fix a given subalgebra $\mathfrak{g} \neq 0$ of
$\mathfrak{h}$. Thus, we can define the Galois group ${\rm Gal} \,
(\mathfrak{h}/\mathfrak{g})$ of the extension $\mathfrak{g}
\subseteq \mathfrak{h}$ as the subgroup of all Lie algebra
automorphisms $\sigma: \mathfrak{h} \to \mathfrak{h}$ that fix
$\mathfrak{g}$, i.e. $\sigma (g) = g$, for all $g\in
\mathfrak{g}$. In an ideal situation, after computing ${\rm Gal}
\, (\mathfrak{h}/\mathfrak{g})$ for as many subalgebras
$\mathfrak{g}$ of $\mathfrak{h}$ as possible, we will have a
complete picture on the entire group ${\rm Aut}_{\rm Lie}
(\mathfrak{h})$ as well.\\ Having defined the group ${\rm Gal} \,
(\mathfrak{h}/\mathfrak{g})$, the following question arises
naturally:

\textit{What is the counterpart in the context of Lie
algebras of the classical Galois theory for fields?}

At first sight the chances of developing a promising Galois theory
for Lie algebras are very low since even the basic concepts from
field theory such as the algebraic/separable/normal extensions,
the splitting fields of a polynomial, etc. are rather difficult to
define in the context of Lie algebras. Moreover, it is unlikely to
have a fundamental theorem establishing a bijective correspondence
between the subgroups of ${\rm Gal} \,
(\mathfrak{h}/\mathfrak{g})$ and the Lie subalgebras
$\mathfrak{g}'$ of $\mathfrak{h}$ such that $\mathfrak{g}
\subseteq \mathfrak{g}' \subseteq \mathfrak{h}$ as in the case of
the classical Galois theory (Example~\ref{thfundim}). On the other
hand, several counterparts of the classical Galois theory for
fields were proved in the context of associative algebras
\cite{Meyer}, differential Galois theory \cite{Amajid}, Hopf
algebras \cite{mont}, von Neumann algebras \cite{ocneanu},
structured ring spectra \cite{rognes} or stable homotopy theory
\cite{mat}. In this context, invariant theory seems to provide a
better approach for our problem: if $G \leq {\rm Aut} (K)$ is a
finite group of automorphisms of a field $K$ then the famous
Artin's theorem states that $k := K^G \subseteq K$ is a finite
Galois extension of degree $[K: k] = |G|$ and ${\rm Gal} (K/k) =
G$ \cite[Theorem 1.8]{Lang}. Furthermore, the  extension $k
\subseteq K$ has a normal basis as a consequence of being Galois;
that is, there exists $x\in K$ such that $\{\sigma (x) \, | \,
\sigma \in G\}$ is a $k$-basis of $K$ \cite[Theorem 1.8]{Lang}.
One of the many generalizations of Artin's theorem deals with
arbitrary actions \cite[Example 8.1.2]{mont}: if $\varphi: G \to
{\rm Aut} (K)$ is an action as automorphisms of a finite group $G$
on a field $K$, then $K/ K^G$ is a Galois extension in the
classical sense with Galois group $G$ if and only if $G$ acts
faithfully on $K$. A version of Artin's theorem for Hopf-Galois
extensions (a concept which generalizes Galois extensions for
fields \cite[Example 8.1.2]{mont}) was obtained in \cite[Theorem
1.18]{blattner}. At this level, Hopf-Galois extensions satisfying
the normal basis property coincide with crossed products
\cite[Corollary 8.2.5]{mont}. This last observation allows us to
restate Artin's theorem in a more convenient but equivalent
manner, as follows: if $ G \leq {\rm Aut} (K)$ is a finite group
of automorphisms of a field $K$, then $K$ is isomorphic to a
crossed product algebra $k \#_{\sigma} \, k[G]^*$ between the
field of invariants $k = K^G$ and the dual algebra of the group
algebra $k[G]$, associated to some cocycle $\sigma : k[G]^*
\otimes k[G]^* \to k$. With this last conclusion in mind, we
replace the category we work in: instead of fields we consider Lie
algebras together with group actions as automorphisms. Now the
question if an Artin type theorem holds for Lie algebras has a
positive answer with a slight amending though: the role of the
classical crossed product of Lie algebras (we use the terminology
of \cite[Section 4.1]{am-2013}), as it arises in the theory of
Chevalley and Eilenberg \cite{CE} will be played by a new
construction, called skew crossed product of Lie algebras, which
is introduced in Section~\ref{prel} as a generalization of the
semidirect product of Lie algebras.

The paper is organized as follows: in Section~\ref{prel} we recall
the basic concepts used throughout. The key role in our approach
is played by the recently introduced unified product of Lie
algebras (\cite{am-2013}). As a special case of the unified
product we will introduce in Example~\ref{produssucit} the skew
crossed product, denoted by $\mathfrak{g} \, \#^{\bullet} \, V $,
whose construction requires a Lie algebra $\mathfrak{g}$, a right
Lie $\mathfrak{g}$-module $(V, \, \leftharpoonup)$ equipped with a
twisted bracket operation and a cocycle $\theta :V \times V \to
\mathfrak{g}$ satisfying a set of axioms. If $\mathfrak{g}
\subseteq \mathfrak{h}$ is an extension of Lie algebras,
Theorem~\ref{grupulgal} provides the explicit description of the
Galois group ${\rm Gal} \, (\mathfrak{h}/\mathfrak{g})$ as a
subgroup of the canonical semidirect product ${\rm GL}_k (V)
\rtimes {\rm Hom}_k (V, \, \mathfrak{g})$ of groups, where $V$ is
a vector space that measures the codimension of $\mathfrak{g}$ in
$\mathfrak{h}$. i.e., the 'degree' of the extension
$\mathfrak{h}/\mathfrak{g}$. We point out that the group ${\rm
GL}_k (V) \rtimes {\rm Hom}_k (V, \, \mathfrak{g})$ is a lot more
complex than the classical general affine group ${\rm GL}_k (V)
\rtimes V$ of an affine space $V$. Theorem~\ref{recsiGal} is the
counterpart of Artin's Theorem for Lie algebras: if $G$ is a
finite group of invertible order in $k$ acting on a Lie algebra
$\mathfrak{h}$, then the Lie algebra $\mathfrak{h}$ is
reconstructed as a skew crossed product $\mathfrak{h} \cong
\mathfrak{h}^G \, \#^{\bullet} \, V$ between the Lie subalgebra of
invariants $\mathfrak{h}^G$ and the kernel $V$ of the Reynolds
operator $t: \mathfrak{h} \to \mathfrak{h}^G$. The Galois group
${\rm Gal} \, (\mathfrak{h}/\mathfrak{h}^G)$ is also described and
Example~\ref{exrelmare} shows that even in the case of faithful
actions, the group ${\rm Gal} \, (\mathfrak{h}/\mathfrak{h}^G)$ is
different from $G$, as opposed to the classical Galois theory of
fields where the two groups coincide. Theorem~\ref{Hilbert90} is
Hilbert's 90 Theorem for Lie algebras: if $G$ is a cyclic group
then the kernel of the Reynolds operator $t: \mathfrak{h} \to
\mathfrak{h}^G$ is determined. The structure of Lie algebras
$\mathfrak{h}$ endowed with a certain type of action of a finite
cyclic group is then described in Corollary~\ref{ciclicstru}:
$\mathfrak{h}$ is isomorphic to a semidirect product between
$\mathfrak{h}^G$ and an ideal of $\mathfrak{h}$. This is the Lie
algebra counterpart of the structure theorem for cyclic Galois
extensions of fields \cite[Theorem 6.2]{Lang}: if $G \leq {\rm
Aut} (K)$ is a cyclic subgroup of order $n$ of the automorphism
group of a field $K$ of characteristic zero and $k := K^G$, then
$K$ is isomorphic to the splitting field over $k$ of a polynomial
of the form $X^n - a \in k[X]$. Section~\ref{exempleconc} is
devoted to computing the Galois groups for several Lie algebra
extensions. Corollary~\ref{galcodim1} shows that if $\mathfrak{g}
\subseteq \mathfrak{h}$ is a Lie subalgebra of codimension $1$ in
$\mathfrak{h}$, then the Galois group ${\rm Gal} \,
(\mathfrak{h}/\mathfrak{g})$ is metabelian (in particular,
solvable). Based on this, the Lie algebra counterpart of the
concept of a radical extension of fields is proposed in
Definition~\ref{radical}. As in the classical Galois theory,
Theorem~\ref{galcodim1aa} proves that the Galois group ${\rm Gal}
\, (\mathfrak{h}/\mathfrak{g})$ of a radical extension
$\mathfrak{g} \subseteq \mathfrak{h}$ of finite dimensional Lie
algebras is a solvable group. Several other applications and
concrete examples of Galois groups are presented. For instance, in
Example~\ref{galtrivial} we present a Lie algebra extension
$\mathfrak{g} \subseteq \mathfrak{h}$ whose Galois group is
trivial i.e., ${\rm Gal} \, (\mathfrak{h}/\mathfrak{g}) = \{\rm
Id_{\mathfrak{h}}\}$. However, we point out that in general the
Galois group of a Lie algebra extension is far from being trivial.

\section{Preliminaries}\label{prel}
\subsection
{Notations and terminology}
All vector spaces, (bi)linear maps or Lie algebras are over an
arbitrary field $k$. A map $f: V \to W$ between two vector spaces
is called trivial if $f (v) = 0$, for all $v\in V$. For two vector
spaces $V$ and $W$ we denote by ${\rm Hom}_k (V, \, W)$ the
abelian group of all linear maps from $V$ to $W$ and by ${\rm
GL}_k (V) := {\rm Aut}_k (V)$ the group of all linear
automorphisms of $V$; if $V$ has dimension $m$ over $k$ then ${\rm
GL}_k (V)$ is identified with the general linear group ${\rm GL}
(m, \, k)$ of all $m\times m$ invertible matrices over $k$. As
usual, ${\rm SL} (m, \, k)$ stands for the special linear group of
degree $m$ over $k$ which is the normal subgroup of ${\rm GL} (m,
\, k)$ consisting of all $m \times m$ matrices of determinant $1$.
Throughout this paper we use the right hand side convention for
the semidirect products whose construction we briefly recall
below. Let $G$ and $H$ be two groups and $\triangleleft : H \times
G \to H$ a right action as automorphisms of the group $G$ on the
group $H$, i.e. the following compatibility conditions hold for
all $g$, $g' \in G$ and $h$, $h'\in H$:
\begin{equation*}\label{actiune}
h \triangleleft 1 = h, \qquad h \triangleleft (gg') =
(h\triangleleft g) \triangleleft g' \qquad (hh') \triangleleft g =
(h\triangleleft g) (h'\triangleleft g)
\end{equation*}
The associated semidirect product $G \rtimes H$ is the group
structure on $G \times H$ with multiplication given for any $g$,
$g' \in G$ and $h$, $h'\in H$ by:
\begin{equation}\label{prodsemidr}
(g, \, h) \cdot (g', \, h') := \bigl(gg', \, (h\triangleleft g')
h' \bigl)
\end{equation}
Let $V$ and $W$ be two vector spaces. Then there exists a
canonical right action as automorphisms of the group ${\rm GL}_k
(V)$ on the abelian group $\bigl({\rm Hom}_k (V, \, W), \, +
\bigl)$ given for any $r \in {\rm Hom}_k (V, \, W)$ and $\sigma
\in {\rm GL}_k (V)$ by:
$$
\triangleleft: \, {\rm Hom}_k (V, \, W) \times {\rm GL}_k (V) \to
{\rm Hom}_k (V, \, W), \qquad r \triangleleft \sigma := r\circ
\sigma
$$
We shall denote by ${\mathbb G}_W^V := {\rm GL}_k (V) \rtimes {\rm
Hom}_k (V, \, W)$ the corresponding semidirect product, i.e.
${\mathbb G}_W^V  := {\rm GL}_k (V) \times {\rm Hom}_k (V, \, W)
$, with the multiplication given for any $\sigma$, $\sigma' \in
{\rm GL}_k (V)$ and $r$, $r'\in {\rm Hom}_k (V, \, W)$ by:
\begin{equation}\label{grupstrc}
(\sigma, \, r ) \cdot (\sigma', \, r') := (\sigma \circ \sigma',
\, r \circ \sigma' + r')
\end{equation}
The unit of the group ${\mathbb G}_W^V$ is $({\rm Id}_V, \, 0)$.
Moreover, ${\rm GL}_k (V) \cong {\rm GL}_k (V)\times \{0\}$ is a
subgroup of ${\mathbb G}_W^V$ and the abelian group ${\rm Hom}_k
(V, \, W) \cong \{{\rm Id}_V\} \times {\rm Hom}_k (V, \, W)$ is a
normal subgroup of ${\mathbb G}_W^V$. The relation $(\sigma, \, r)
= (\sigma, \, 0) \cdot ({\rm Id}_V, \, r)$ gives an exact
factorization ${\mathbb G}_W^V = {\rm GL}_k (V) \, \cdot \, {\rm
Hom}_k (V, \, W) $ of the group ${\mathbb G}_W^V$ through the
subgroup ${\rm GL}_k (V)$ and the abelian normal subgroup ${\rm
Hom}_k (V, \, W)$. Being a semidirect product, the group ${\mathbb
G}_W^V$ is a split extension of ${\rm GL}_k (V)$ by the abelian
group ${\rm Hom}_k (V, \, W)$; that is, it fits into an exact
sequence of groups $ 0 \to {\rm Hom}_k (V, \, W) \to {\mathbb
G}_W^V \to {\rm GL}_k (V) \to 1$ and the canonical projection
${\mathbb G}_W^V \to {\rm GL}_k (V)\to 1$ has a section that is a
morphism of groups. The group ${\mathbb G}_W^V$ constructed above
will play a crucial role in the paper since the Galois group of an
arbitrary extension of Lie algebras embeds in such a group. If $V
\cong k$ is a $1$-dimensional vector space, then the group
${\mathbb G}_W^k$ identifies with  the semidirect product $k^*
\rtimes W$ of the multiplicative group of units $(k^*, \cdot)$
with the abelian group $(W, +)$ and will be denoted simply by
${\mathbb G}_W$. The multiplication on ${\mathbb G}_W = k^*
\rtimes W$ is given for any $u$, $u'\in k^*$ and $x$, $x' \in W$
by:
\begin{equation}\label{grupstrcb}
(u, \, x ) \cdot (u', \, x') := (uu', \, u'x + x')
\end{equation}
The non-abelian group ${\mathbb G}_W$ is an extension of the
abelian group $k^*$ by the abelian group $W = (W, +)$; hence,
${\mathbb G}_W$ is a metabelian group (i.e., the derived
subgroup $[{\mathbb G}_W, \, {\mathbb G}_W ]$ is abelian). In
particular, ${\mathbb G}_W$ is a $2$-step solvable group. On the
other hand, if $W \cong k$ is a $1$-dimensional vector space then
${\mathbb G}_k^V  = {\rm GL}_k (V) \rtimes V^*$, and for finite
dimensional vector spaces $V$ the group can be identified with the
general affine group ${\rm Aff} \, (V) = {\rm GL}_k (V) \rtimes
V$.

\subsection{Groups acting on Lie algebras}
For all basic notions and undefined concepts pertaining to Lie algebra theory we
refer the reader to \cite{H, jacobson}. We denote by
$\mathfrak{gl} (m, k)$ (resp. $\mathfrak{sl} (m, k)$) the general
(resp. special) linear Lie algebra of all $m\times m$ matrices
(resp. all $m\times m$ matrices of trace $0$) having the bracket
$[A, \, B] := AB - BA$. Representations of a Lie algebra
$\mathfrak{g}$ will be viewed as right Lie $\mathfrak{g}$-modules:
a right Lie $\mathfrak{g}$-module is a vector space $V$ together
with a bilinear map $ \leftharpoonup \, : V \times \mathfrak{g}
\to V$ such that $x \leftharpoonup [a, \, b] = (x \leftharpoonup
a) \leftharpoonup b \,  - \, (x \leftharpoonup b) \leftharpoonup
a$, for all $a$, $b \in \mathfrak{g}$ and $x\in V$. Let $G$ be a
group, $\mathfrak{h}$ a Lie algebra and ${\rm Aut}_{\rm Lie}
(\mathfrak{h})$ the group of all Lie algebra automorphisms of
$\mathfrak{h}$. If $\varphi: G \to {\rm Aut}_{\rm Lie}
(\mathfrak{h})$ is a morphism of groups we will say that $G$
acts as automorphisms on $\mathfrak{h}$ and we shall denote
$\varphi (g) (x) = g \triangleright x$, for all $g\in G$ and $x\in
\mathfrak{h}$. The action is called faithful if $\varphi$ is
injective. Since $\varphi(g)$ is a Lie algebra map we have that $g
\triangleright [x, \, y] = [g\triangleright x, \, g \triangleright
y]$, for all $g\in G$ and $x$, $y\in \mathfrak{h}$. The subalgebra
of invariants $\mathfrak{h}^G$ of the action $\varphi$ of $G$ on
$\mathfrak{h}$ is defined by:
$$
\mathfrak{h}^G := \{ x \in \mathfrak{h} \, | \, g \triangleright x
= x, \forall \, g\in G \}
$$
Then $\mathfrak{h}^G \subseteq \mathfrak{h}$ is a Lie subalgebra
of $\mathfrak{h}$. If $G$ is a finite group and $|G|$ is
invertible in the base field $k$ then the trace map or
Reynolds operator (we borrowed the terminology from the
classical invariant theory of groups acting on associative
algebras \cite{hochster}) defined for any $x\in \mathfrak{h}$ by:
\begin{equation}\label{trace}
t = t_{\triangleright}: \, \mathfrak{h} \to \mathfrak{h}^G, \qquad
t (x) := |G|^{-1} \, \sum_{g\in G} \, g\triangleright x
\end{equation}
is a linear retraction of the canonical inclusion $\mathfrak{h}^G
\hookrightarrow \mathfrak{h}$. Furthermore, for any $a \in
\mathfrak{h}^G$ and $x \in \mathfrak{h}$ we have $t ([a, \, x]) =
[a, \, t(x)]$.

\begin{example}\label{exgract}
(1) The basic example of a group
acting on a Lie algebra is
provided by any subgroup $G$ of ${\rm Aut}_{\rm Lie}
(\mathfrak{h})$ with the canonical action given by $\sigma
\triangleright x : = \sigma (x)$, for all $\sigma \in G$ and $x\in
\mathfrak{h}$. Automorphic Lie algebras \cite{lomb} introduced in
the context of integrable systems are examples of Lie algebras of
invariants - for further details we refer to \cite{kni}.

(2) The group ${\rm GL} (n, \, k)$ acts on $\mathfrak{gl} (n, k)$
(resp. $\mathfrak{sl} (n, k)$) by conjugation, i.e. $U
\triangleright X : = U X U^{-1}$, for all $U \in {\rm GL} (n, \,
k)$ and $X \in \mathfrak{gl} (n, k)$ (resp. $X \in \mathfrak{sl}
(n, k)$). Thus, any subgroup of ${\rm GL} (n, \, k)$ (such as
${\rm SL} (n, \, k)$, the permutation group $S_n$ on $n$ letters,
the cyclic group $C_{n}$ or more generally any finite group of
order $n$) acts on the Lie algebras $\mathfrak{gl} (n, k)$ and
$\mathfrak{sl} (n, k)$ by the same action. The subalgebras of
invariants for these actions are exactly the centralizers  in
$\mathfrak{gl} (n, k)$ (resp. $\mathfrak{sl} (n, k)$) of ${\rm GL}
(n, \, k)$ (or its subgroups). For example, the symmetric group
$S_n$ acts as automorphisms on $\mathfrak{gl} (n, k)$ via the
action:
\begin{equation}\label{symact}
S_n \to {\rm Aut}_{\rm Lie} (\mathfrak{gl} (n, k)), \quad \tau
\triangleright e_{ij} : = (e_{1 \tau (1)} + \cdots + e_{n \tau
(n)}) \, e_{ij} \, (e_{1 \tau (1)} + \cdots + e_{n \tau (n)})^{-1}
\end{equation}
for all $\tau \in S_n$ and $i$, $j = 1, \cdots, n$, where $e_{ij}$
is $n\times n$ matrix which has $1$ in the $(i, j)^{th}$-position
and zeros elsewhere. We will describe the subalgebras of
invariants $\mathfrak{gl} (n, k)^{C_n}$ and respectively
$\mathfrak{gl} (n, k)^{S_n}$ of the action defined by \ref{symact}. We start
by looking at $\mathfrak{gl} (n, k)^{C_n}$, where the
cyclic group $C_{n}$ is considered to be the subgroup of $S_{n}$ generated by
the cycle $(12...n)$. An easy computation gives:
\begin{equation}\label{inv002}
(1\, 2\, ... \,n)  \triangleright A =  (e_{12} + e_{23} + e_{34} +
\cdots + e_{n1}) A (e_{21} + e_{32} + e_{43} + \cdots + e_{1n})
\end{equation}
It follows that $A = \sum_{i, j = 1}^n \, a_{ij} \, e_{ij} \in
\mathfrak{gl} (n, k)^{C_n}$ if and only if $(12...n)
\triangleright A = A$. This yields:
\begin{equation}\label{cc100}
a_{11} = a_{nn}, \quad a_{1j} = a_{n, j-1}, \quad a_{j 1} =
a_{j-1, n}, \quad a_{i j} = a_{i-1, j-1}, \, {\rm for} \,
{\rm all} \,\, i, \,j\geqslant 2
\end{equation}
Therefore, by a careful analysis of the above compatibilities, we
obtain that $\mathfrak{gl} (n, k)^{C_n}$ is the $n$-dimensional
subalgebra of $\mathfrak{gl} (n, k)$ consisting of all $n\times
n$-matrices of the form:
$$
\begin{pmatrix}
a_1 & a_2 & \cdots & a_n\\
a_n & a_1 & \cdots & a_{n-1} \\
a_{n-1} & a_n & \cdots & a_{n-2}\\
\cdot & \cdot & \cdots & \cdot \\
a_2 & a_3 & \cdots & a_1
\end{pmatrix}
$$
for all $a_1, \cdots, a_n \in k$. Next in line is $\mathfrak{gl}
(n, k)^{S_n}$. As $S_n$ is generated by the transposition $(12)$
and the cycle $(12...n)$ it follows that a matrix $A = \sum_{i, j
= 1}^n \, a_{ij} \, e_{ij} \in \mathfrak{gl} (n, k)^{S_n}$ if and
only if $\tau \triangleright A = A$, for $\tau = (1\, 2)$ and
$\tau = (1\, 2\, ... \,n)$. Using again the formula given in (\ref{symact}) we obtain:
\begin{equation}\label{inv001}
(1\, 2) \triangleright A = (e_{12} + e_{21} + e_{33} + \cdots +
e_{nn}) A (e_{12} + e_{21} + e_{33} + \cdots + e_{nn})
\end{equation}
which yields:
\begin{eqnarray*}
a_{11} = a_{22}, \quad a_{12} = a_{21}, \quad a_{1j} = a_{2j},
\quad a_{j1} = a_{j2}, \, {\rm for} \, {\rm all} \,\,
j\geqslant 3.
\end{eqnarray*}
The above compatibilities together with those in
equation~(\ref{cc100}) come down to: $a_{ii} = \alpha \in k$ and
$a_{i j} = \beta \in k$, for all $i$, $j = 1, \cdots, n, \, i \neq
j$. Thus, $\mathfrak{gl} (n, k)^{S_n} = \big\{ \alpha I_n + \beta
\, \sum_{\substack{i, \, j=1\\ i \neq j }}^{n}  e_{ij} ~|~ \alpha,
\, \beta \in k \big\}$. Both subalgebras of invariants
$\mathfrak{gl} (n, k)^{C_n}$ and respectively $\mathfrak{gl} (n,
k)^{S_n}$ are abelian.

(3) The actions as automorphism of abelian groups on Lie algebras
can be seen as the dual concept of the well studied gradings on
Lie algebras: for an overview and the importance of the problem
introduced by Kac \cite{Kac} we refer to \cite{Eld, Koch, svob}
and the references therein.

Let $G = (G, +)$ be an abelian group and $\hat{G}$ be the group of
characters on $G$, i.e. all morphisms of groups $G \to k^*$. A $G$-graded Lie algebra is a Lie algebra
$\mathfrak{h}$ such that $\mathfrak{h} = \oplus_{g\in G} \,
\mathfrak{h}_g$, where any $\mathfrak{h}_g$ is a subspace of
$\mathfrak{h}$ such that $[\mathfrak{h}_g, \, \mathfrak{h}_{g'} ]
\subseteq \mathfrak{h}_{g + g'}$, for all $g$, $g' \in G$. If
$\mathfrak{h} = \oplus_{g\in G} \, \mathfrak{h}_g$ is a $G$-graded
Lie algebra then the map
$$
\varphi: \hat{G} \to {\rm Aut}_{\rm Lie} (\mathfrak{h}), \qquad
\varphi(\chi) (x_g) := \chi (g) \, x_g
$$
for all $\chi \in \hat{G}$, $g\in G$ and $x_g \in \mathfrak{h}_g$
is a faithful action of $\hat{G}$ on $\mathfrak{h}$. Conversely,
if $\varphi: \hat{G} \to {\rm Aut}_{\rm Lie} (\mathfrak{h})$ is an
injective morphism of groups, then $\mathfrak{h} = \oplus_{g\in G}
\, \mathfrak{h}_g$ is a $G$-graded Lie algebra where
$\mathfrak{h}_g := \{ y \in \mathfrak{h} \, | \, \chi
\triangleright y = \chi(g) y, \, \, \forall \chi \in \hat{G} \}$.
In some special cases we can say more. For instance, if $k$ is an
algebraically closed field of characteristic zero and $G$ is a
finitely generated abelian group, then there exists a one-to-one
correspondence between the set of all $G$-gradings on a given Lie
algebra $\mathfrak{h}$ and the set of all faithful actions
$\hat{G} \to {\rm Aut}_{\rm Lie} (\mathfrak{h})$ of $\hat{G}$ on
$\mathfrak{h}$ \cite[Proposition 4.1]{Koch}. Working with actions
instead of gradings comes with the advantage of not assuming the
group $G$ to be abelian nor the actions to be faithful.

(4) As a special case of (3) let us take $\mathfrak{h} =
\oplus_{i\in \mathbb Z} \, \mathfrak{h}_i$ to be a $\mathbb
Z$-graded Lie algebra. Then the multiplicative group of units
$k^*$ acts on $\mathfrak{h}$ via the following morphism of groups:
\begin{equation}\label{actgrad}
\varphi: k^* \to {\rm Aut}_{\rm Lie} (\mathfrak{h}), \qquad
\varphi(u) (y_i) := u^i \, y_i
\end{equation}
for all $u\in k^*$, $i\in \mathbb Z$ and $y_i \in \mathfrak{h}_i$
a homogeneous element of degree $i$. Moreover, the subalgebra of
invariants $\mathfrak{h}^{k^*} = \mathfrak{h}_0$, the Lie
subalgebra of all elements of degree zero. The typical example of
a $\mathbb Z$-graded Lie algebra is the Witt algebra
$\mathfrak{W}$ which is the vector space having $\{e_i \, | \, i
\in \mathbb Z\}$ as a basis and the bracket $[e_i, \, e_j] :=
(i-j) \, e_{i+j}$, for all $i$, $j\in \mathbb Z$. Another example
is given by $\mathfrak{h} := \mathfrak{sl} (2, k)$, the Lie
algebra with basis $\{e_1, \, e_2, \, e_3\}$ and the usual bracket
$[e_1, \, e_2] = e_3$, $[e_1, \, e_3] = -2 \, e_1$ and $[e_2, \,
e_3] = 2 \, e_2$ viewed with the standard grading: namely $e_1$
has degree $-1$, $e_2$ has degree $1$ and $e_3$ has degree $0$. We
obtain that the group $k^*$ acts on $\mathfrak{sl} (2, k)$ via:
\begin{equation}\label{sl2}
\varphi: k^* \to {\rm Aut}_{\rm Lie} \bigl( \mathfrak{sl} (2, k)
\bigl), \qquad u \triangleright (\alpha e_1 + \beta e_2 + \gamma
e_3) := u^{-1} \alpha e_1 + u \beta e_2 + \gamma e_3
\end{equation}
for all $u\in k^*$ and $\alpha$, $\beta$, $\gamma \in k$. The
algebra of invariants $\mathfrak{sl} (2, k)^{k^*}$ is the abelian
Lie algebra having $e_3$ as a basis.
\end{example}

\subsection{Unified products and skew crossed product for Lie algebras}
We recall from \cite{am-2013} some concepts that will play a key
role in the paper. Let $\mathfrak{g} = (\mathfrak{g}, \, [-, \,
-])$ be a Lie algebra and $V$ a vector space. A Lie extending system of $\mathfrak{g}$ through $V$ is a
system $\Lambda (\mathfrak{g}, \, V) = \bigl(\leftharpoonup, \,
\rightharpoonup, \, \theta, \{-, \, -\} \bigl)$ consisting of four
bilinear maps $\leftharpoonup: V \times \mathfrak{g} \to V$, \,
$\rightharpoonup : V \times \mathfrak{g} \to \mathfrak{g}$, \,
$\theta: V\times V \to \mathfrak{g}$, \, $\{-, \, -\} : V\times V
\to V$ satisfying the following compatibility conditions for any
$a$, $b \in \mathfrak{g}$, $x$, $y$, $z \in V$:
\begin{enumerate}
\item[(L1)] $(V, \, \leftharpoonup)$ is a right Lie
$\mathfrak{g}$-module, $ \theta (x, \, x) = 0$ and $\{x, \, x \} =
0$

\item[(L2)] $x \rightharpoonup [a, \, b] = [x \rightharpoonup a,
\, b] + [a, \, x \rightharpoonup b] + (x \leftharpoonup a)
\rightharpoonup b - (x \leftharpoonup b) \rightharpoonup a$

\item[(L3)] $\{x, \, y \} \leftharpoonup a = \{x, \, y
\leftharpoonup a \} + \{x \leftharpoonup a, \, y \} + x
\leftharpoonup (y \rightharpoonup a) - y \leftharpoonup (x
\rightharpoonup a)$

\item[(L4)] $\{x,\, y \} \rightharpoonup a = x \rightharpoonup (y
\rightharpoonup a) - y \rightharpoonup (x \rightharpoonup a) + [a,
\, \theta (x,\, y)] + \theta (x, y \leftharpoonup a) +\\ + \theta
(x \leftharpoonup a, y)$

\item[(L5)] $\sum_{(c)} \theta \bigl(x, \, \{y, \, z \}\bigl) \, +
\,  \sum_{(c)} x \rightharpoonup \theta (y, z) = 0$

\item[(L6)] $\sum_{(c)} \{x, \, \{y, \, z\}\} \, + \,  \sum_{(c)}
x \leftharpoonup \theta (y, z) = 0$
\end{enumerate}
where $\sum_{(c)}$ denotes the circular sum. The bilinear maps
$\leftharpoonup$ and $\rightharpoonup$ are called the
actions of $\Lambda (\mathfrak{g}, \, V)$ and $\theta$ is
called the cocycle of $\Lambda (\mathfrak{g}, \, V)$. Let
$\Lambda (\mathfrak{g}, \, V) = \bigl(\leftharpoonup,
\rightharpoonup,\, \theta, \{-, \, -\} \bigl)$ be an extending
system of $\mathfrak{g}$ through $V$ and let $ \mathfrak{g}
\,\natural \, V = \mathfrak{g} \,\natural_{\Lambda (\mathfrak{g},
V)} \, V $ be the vector space $\mathfrak{g} \, \times V$ with the
bracket $[ -, \, -]$ defined for any $a$, $b \in \mathfrak{g}$ and
$x$, $y \in V$ by:
\begin{equation}\label{brackunif}
[(a, x), \, (b, y)] := \bigl( [a, \, b] + x \rightharpoonup b - y
\rightharpoonup a + \theta (x, y), \,\, \{x, \, y \} +
x\leftharpoonup b - y \leftharpoonup a \bigl)
\end{equation}
Then $\mathfrak{g} \,\natural \, V $ is a Lie algebra
\cite[Theorem 2.2]{am-2013} called the unified product of
$\mathfrak{g}$ and $\Lambda (\mathfrak{g}, \, V)$, and contains
$\mathfrak{g} \cong \mathfrak{g} \times \{0\}$ as a Lie
subalgebra. Conversely, let $\mathfrak{g}$ be a Lie algebra, $E$ a
vector space such that $\mathfrak{g}$ is a subspace of $E$. Then,
any Lie algebra structure $[-, \, -]_E$ on $E$ containing
$\mathfrak{g}$ as a Lie subalgebra is isomorphic to a unified
product: i.e., $(E, [-, \, -]_E) \cong \mathfrak{g} \, \natural \,
V$, for some extending system $\Lambda (\mathfrak{g}, \, V) =
\bigl(\leftharpoonup, \, \rightharpoonup, \, \theta, \{-, \, -\}
\bigl)$ of $\mathfrak{g}$ through $V$ (\cite[Theorem
2.4]{am-2013}). As explained in \cite{am-2013}, the well known
bicrossed product as well as the crossed product of Lie algebras
are special cases of unified products. First of all, we observe
that the extending system of a Lie algebra $\mathfrak{g}$ through
$V$ is a cocycle deformation of the concept of matched pair
between two Lie algebras, as introduced in \cite{LW, majid}.
Indeed, if $\theta$ is the trivial map, then $\Lambda
(\mathfrak{g}, \, V) = \bigl(\leftharpoonup, \, \rightharpoonup,
\, \theta := 0, \, \{-, \, -\} \bigl)$ is a Lie extending system
of $\mathfrak{g}$ through $V$ if and only if $(V, \, \{-, \, -\})$
is a Lie algebra and $(\mathfrak{g}, V, \leftharpoonup, \,
\rightharpoonup)$ is a matched pair of Lie algebras. In this case,
the associated unified product $\mathfrak{g} \,\natural \, V =
\mathfrak{g} \bowtie V $ is precisely the bicrossed product
(also called bicrossproduct in \cite[Theorem 4.1]{majid}
and double Lie algebra in \cite[Definition 3.3]{LW})
associated to the matched pair $(\mathfrak{g}, V, \leftharpoonup,
\, \rightharpoonup)$. Secondly,  if $\leftharpoonup$ is the
trivial map, then $\Lambda (\mathfrak{g}, \, V) =
\bigl(\leftharpoonup := 0, \, \rightharpoonup, \, \theta, \, \{-,
\, -\} \bigl)$ is a Lie extending system of $\mathfrak{g}$ through
$V$ if and only if $(V, \{-, -\})$ is a Lie algebra and the
following four compatibilities hold for any $g$, $h\in
\mathfrak{g}$ and $x$, $y$, $z\in V$:
\begin{eqnarray*}
&&  f(x,\, x) = 0\\
&& \{x, \, y \} \rightharpoonup g = x
\rightharpoonup (y \rightharpoonup g) - y \rightharpoonup (x
\rightharpoonup g) + [g, \, \theta (x, \, y)]\\
&&  x \rightharpoonup [g, \, h] = [x \rightharpoonup g, \, h] +
[g, \, x \rightharpoonup h]\\
&& \sum_{(c)} \, \theta (x,
\{y, z \}) + \sum_{(c)} \, x\rightharpoonup \theta (y, z) =0
\end{eqnarray*}
In this case, the associated unified product $\mathfrak{g}
\,\natural \, V = \mathfrak{g} \# \, V $ is the classical
crossed product of the Lie algebras $\mathfrak{g}$ and $V$
introduced in \cite{CE} in connection to the extension problem.
The Lie algebra $\mathfrak{g} \# \, V $ is an extension of $V$ by
$\mathfrak{g}$, which in an ideal of it. Although it is completely
different from both the crossed and the bicrossed product, the
following construction is also a special case of the unified
product:

\begin{example}\label{produssucit}
Consider the bilinear map $\rightharpoonup : V \times \mathfrak{g}
\to \mathfrak{g}$ to be trivial, i.e. $x \rightharpoonup g = 0$,
for all $x\in V$ and $g \in \mathfrak{g}$. Then $\Lambda
(\mathfrak{g}, \, V) = \bigl(\leftharpoonup, \, \rightharpoonup :=
0, \, \theta, \{-, \, -\} \bigl)$ is a Lie extending system of
$\mathfrak{g}$ through $V$ if and only if the following
compatibility conditions hold for any $a \in \mathfrak{g}$, $x$,
$y$, $z \in V$:
\begin{enumerate}
\item[(T1)] \, $(V, \, \leftharpoonup)$ is a right Lie
$\mathfrak{g}$-module, $ \theta (x, \, x) = 0$ and $\{x, \, x \} =
0$

\item[(T2)] \, $\{x, \, y \} \leftharpoonup a = \{x, \, y
\leftharpoonup a \} + \{x \leftharpoonup a, \, y \}$

\item[(T3)] \,  $[\theta (x, \, y), \, a]  =  \theta (x, \, y
\leftharpoonup a) + \theta (x \leftharpoonup a, \, y)$

\item[(T4)] \, $\sum_{(c)} \theta \bigl(x, \, \{y, \, z \}\bigl) =
0$

\item[(T5)] \,  $\sum_{(c)} \{x, \, \{y, \, z\}\} \, + \,
\sum_{(c)} x \leftharpoonup \theta (y, \, z) = 0$
\end{enumerate}
In this case the trivial map $\rightharpoonup$ will be omitted
when writing down the Lie extending system $\Lambda (\mathfrak{g},
\, V)$. The associated unified product $\mathfrak{g} \, \natural
\, V $ will be denoted by $\mathfrak{g} \, \#^{\bullet} \, V $ and
we will call it the skew crossed product associated to the
system $\Lambda (\mathfrak{g}, \, V) = \bigl(\leftharpoonup, \,
\theta, \{-, \, -\} \bigl)$ satisfying (T1)$-$(T5). Thus,
$\mathfrak{g} \, \#^{\bullet} \, V $ is the vector space
$\mathfrak{g} \, \times V$ with the Lie bracket $[ -, \, -]$
defined for any $a$, $b \in \mathfrak{g}$ and $x$, $y \in V$ by:
\begin{equation}\label{brackskcp}
[(a, x), \, (b, y)] := \bigl( [a, \, b] + \theta (x, y), \,\, \{x,
\, y \} + x\leftharpoonup b - y \leftharpoonup a \bigl)
\end{equation}
As already mentioned, the skew crossed product $\mathfrak{g} \,
\#^{\bullet} \, V $ is completely different from both the crossed
as well as the bicrossed product of Lie algebras: in the
construction of $\mathfrak{g} \, \#^{\bullet} \, V $ the bilinear
map $\{-, \, -\}$ on $V$ is not a Lie bracket (axiom (T5) is a
deformation of the Jacobi identity) and moreover $\mathfrak{g}
\cong \mathfrak{g} \times \{0\}$ is only a subalgebra in
$\mathfrak{g} \, \#^{\bullet} \, V $, not an ideal. An explicit
example of a skew crossed product is given in Example~\ref{priexe}
where we write $\mathfrak{sl} (2, k)$ as a skew crossed product $k
\#^{\bullet}\, k^2$  between the abelian Lie algebras of
dimensions one and two, associated to a certain right action
$\leftharpoonup$ and a cocylce $\theta$.

Moreover, if the cocycle $\theta$ of a Lie extending structure
$\Lambda (\mathfrak{g}, \, V) = \bigl(\leftharpoonup, \, \theta,
\{-, \, -\} \bigl)$ is also the trivial map, then the skew crossed
product $\mathfrak{g} \, \#^{\bullet} \, V $ is just the usual
semidirect product $\mathfrak{g} \, \rtimes \, V $ of two Lie
algebras written in the right side convention. We point out that
in our notational convention the Lie algebra $\mathfrak{g} \,
\rtimes \, V $ contains $V \cong \{0 \} \times V$ as an ideal.
\end{example}

\section{The Galois group of Lie algebra extensions}\label{GalLie}
Let $\mathfrak{g} \subseteq \mathfrak{h}$ be an extension of Lie
algebras. We define the Galois group ${\rm Gal} \,
(\mathfrak{h}/\mathfrak{g})$ as the subgroup of ${\rm Aut}_{\rm
Lie} (\mathfrak{h})$ consisting of all Lie algebra automorphisms
of $\mathfrak{h}$ that fix $\mathfrak{g}$, i.e.:
$$
{\rm Gal} \, (\mathfrak{h}/\mathfrak{g}) := \{ \sigma \in {\rm
Aut}_{\rm Lie} (\mathfrak{h}) \, | \, \sigma (g) = g, \, \forall
\, g\in \mathfrak{g} \}
$$
Since ${\rm Gal} \, (\mathfrak{h}/\mathfrak{g}) \leq {\rm
Aut}_{\rm Lie} (\mathfrak{h})$ we can consider the subalgebra of
invariants $\mathfrak{h}^{{\rm Gal} (\mathfrak{h}/\mathfrak{g})}$.
Of course, we have that $\mathfrak{g} \subseteq \mathfrak{h}^{
{\rm Gal} (\mathfrak{h}/\mathfrak{g})}$. As it can be seen from
the example below, a fundamental theorem establishing a bijective
correspondence between the subgroups of ${\rm Gal} \,
(\mathfrak{h}/\mathfrak{g})$ and the Lie subalgebras $\mathfrak{g}'$ of
$\mathfrak{h}$ such that $\mathfrak{g} \subseteq \mathfrak{g}'
\subseteq \mathfrak{h}$ does not hold in the context of Lie
algebras.

\begin{example}\label{thfundim}
Let $\mathfrak{h} := \mathfrak{aff} (2, k)$ be the $2$-dimensional
affine Lie algebra with basis $\{e_1, \, e_2 \}$ and bracket
$[e_1, \, e_2] = e_2$ and $\mathfrak{g} := ke_1$ the abelian Lie
subalgebra. Then ${\rm Gal} \, (\mathfrak{aff}(2,
k)/\mathfrak{g})$ is isomorphic to $k^*$ the multiplicative group of units of
$k$ while the subalgebra of invariants $\mathfrak{aff} (2, k)^{k^*}
= \mathfrak{g}$. Of course, between $\mathfrak{g}$ and
$\mathfrak{aff} (2, k)$ there are no proper intermediary
subalgebras while $k^*$ has many subgroups (such as the
cyclic groups $U_n (k)$ of $n$-roots of unity) whose subalgebras
of invariants coincide with $\mathfrak{g}$.
\end{example}

In what follows we will describe the group ${\rm Gal} \,
(\mathfrak{h}/\mathfrak{g})$. First we fix a linear map $p:
\mathfrak{h} \to \mathfrak{g}$ such that $p(g) = g$, for all $g
\in \mathfrak{g}$ - such a map always exists as $k$ is a field.
Then $V := {\rm Ker} (p)$ is a subspace of $\mathfrak{h}$ and a
complement of $\mathfrak{g}$ in $\mathfrak{h}$, that is
$\mathfrak{h} = \mathfrak{g} + V$ and $\mathfrak{g} \cap V =
\{0\}$. Using $p$ we define a Lie extending system of
$\mathfrak{g}$ through $V$, called the canonical extending
system associated to $p$, where the bilinear maps
$\rightharpoonup \, : V \times \mathfrak{g} \to \mathfrak{g}$,
$\leftharpoonup \, : V \times \mathfrak{g} \to V$, $\theta : V
\times V \to \mathfrak{g}$ and  $\{\, , \, \} : V \times V \to V$
are given by the following formulas \cite[Theorem 2.4]{am-2013}
for any $g \in \mathfrak{g}$ and $x$, $y\in V$:
\begin{eqnarray}
x \rightharpoonup g &:=& p \bigl([x, \,g]\bigl), \qquad
x \leftharpoonup g := [x, \, g] - p \bigl([x, \, g]\bigl) \label{can1}\\
\theta(x, y) &:=& p \bigl([x, \, y]\bigl), \qquad  \{x, y\} := [x,
\, y] - p \bigl([x, \, y]\bigl) \label{can2}
\end{eqnarray}
Thus we can construct the unified product $\mathfrak{g} \,\natural
\, V$ associated to the canonical extending structure, which is a
Lie algebra with the bracket given by the formula (\ref{brackunif}). The map
$\varphi: \mathfrak{g} \,\natural \, V \to \mathfrak{h}$, defined by
$\varphi(g, x) := g + x$, is an isomorphism of Lie algebras with
the inverse given by $\varphi^{-1}(y) := \bigl(p(y), \, y -
p(y)\bigl)$, for all $y \in \mathfrak{h}$. Since $\varphi$ fixes
$\mathfrak{g} \cong \mathfrak{g} \times \{0\}$ we obtain that the
map
\begin{equation}\label{izogalgen}
{\rm Gal} \, (\mathfrak{h}/\mathfrak{g}) \to {\rm Gal} \, (
\mathfrak{g} \,\natural \, V /\mathfrak{g}), \qquad \sigma \mapsto
\varphi^{-1} \circ \sigma \circ \varphi
\end{equation}
is an isomorphism of groups with the inverse given by $\psi
\mapsto \varphi \circ \psi \circ \varphi^{-1}$. It follows from
\cite[Lemma 2.5]{am-2013} that there exists a bijection between
the set of all elements $\psi \in {\rm Gal} \, ( \mathfrak{g}
\,\natural \, V /\mathfrak{g})$ and the set of all pairs $(\sigma,
r) \in {\rm GL}_k (V) \times {\rm Hom}_k (V, \, \mathfrak{g})$,
satisfying the following four compatibility conditions for any $g
\in \mathfrak{g}$, $x$, $y \in V$:
\begin{enumerate}
\item[(G1)] $ \sigma (x \leftharpoonup g) =  \sigma (x)
\leftharpoonup g$, that is $\sigma : V \to V$ is a right Lie
$\mathfrak{g}$-module map;

\item[(G2)] $r(x \leftharpoonup g) = [r(x), \, g] +
\bigl(\sigma(x) - x \bigl) \rightharpoonup g$;

\item[(G3)] $\sigma (\{x, y\}) = \{\sigma(x), \, \sigma(y)\} +
\sigma(x) \leftharpoonup r(y) - \sigma(y) \leftharpoonup r(x)$;

\item[(G4)] $r(\{x, \, y\}) = [r(x), \, r(y)] + \sigma(x)
\rightharpoonup r(y) - \sigma(y) \rightharpoonup r(x) + \theta
\bigl(\sigma(x), \sigma(y)\bigl) - \theta(x, y)$
\end{enumerate}
The bijection is such that $\psi = \psi_{(\sigma, r)} \in {\rm
Gal} \, ( \mathfrak{g} \,\natural \, V /\mathfrak{g}) $
corresponding to $(\sigma, r) \in {\rm GL}_k (V) \times {\rm
Hom}_k (V, \, \mathfrak{g})$ is given by $\psi(g, \, x) := (g +
r(x), \, \sigma(x))$, for all $g \in \mathfrak{g}$ and $x \in V$.
We point out that $\psi_{(\sigma, r)}$ is indeed an element of
${\rm Gal} \, ( \mathfrak{g} \,\natural \, V /\mathfrak{g})$ with
the inverse given by $ \psi_{(\sigma, r)}^{-1}(g, \, x) = \bigl(g
- r(\sigma^{-1}(x)), \, \sigma^{-1}(x)\bigl)$, for all $g \in
\mathfrak{g}$ and $x \in V$. We denote by ${\mathbb
G}_{\mathfrak{g}}^V \, \bigl( \leftharpoonup, \, \rightharpoonup,
\, \theta, \{-, \, -\} \bigl)$ the set of all pairs $(\sigma, \,
r) \in {\rm GL}_k (V) \times {\rm Hom}_k (V, \, \mathfrak{g})$
satisfying the compatibility conditions (G1)$-$(G4). It is
straightforward to see that ${\mathbb G}_{\mathfrak{g}}^V \,
\bigl( \leftharpoonup, \, \rightharpoonup, \, \theta, \{-, \, -\}
\bigl)$ is a subgroup of the semidirect product of groups
${\mathbb G}_{\mathfrak{g}}^V := {\rm GL}_k (V) \rtimes {\rm
Hom}_k (V, \, \mathfrak{g})$ with the group structure given by
formula (\ref{grupstrc}). Now, for any $(\sigma, \, r)$ and
$(\sigma', \, r') \in {\mathbb G}_{\mathfrak{g}}^V \, \bigl(
\leftharpoonup, \, \rightharpoonup, \, \theta, \{-, \, -\}
\bigl)$, $g \in \mathfrak{g}$ and $x\in V$ we have:
$$
\psi_{(\sigma, \, r)} \circ \psi_{(\sigma', \, r')} (g, \, x) =
\bigl(g + r'(x) + r(\sigma' (x) ), \, \sigma (\sigma'(x) \bigl) =
\psi_{(\sigma\circ \sigma', \, r\circ \sigma' + r')} (g, \, x)
$$
i.e. $\psi_{(\sigma, \, r)} \circ \psi_{(\sigma', \, r')} =
\psi_{(\sigma\circ \sigma', \, r\circ \sigma' + r')}$. Finally, we
recall that $\mathfrak{h} = \mathfrak{g} + V$ and $\mathfrak{g}
\cap V = \{0\}$, i.e. any element $y \in \mathfrak{h}$ has a
unique decomposition as $y = g + x$, for $g \in \mathfrak{g}$ and
$x\in V = {\rm Ker} (p)$. Putting all together we proved the
following:

\begin{theorem}\label{grupulgal}
Let $\mathfrak{g} \subseteq \mathfrak{h}$ be an extension of Lie
algebras, $p: \mathfrak{h} \to \mathfrak{g}$ a linear retraction
of the inclusion $\mathfrak{g} \subseteq \mathfrak{h}$, $V = {\rm
Ker} (p)$ and consider $\Lambda (\mathfrak{g}, \, V) =
\bigl(\leftharpoonup, \, \rightharpoonup, \, \theta, \{-, \, -\}
\bigl)$ to be the canonical Lie extending system associated to
$p$. Then there exists an isomorphism of groups defined for any
$(\sigma, \, r)\in {\mathbb G}_{\mathfrak{g}}^V \, \bigl(
\leftharpoonup, \, \rightharpoonup, \, \theta, \{-, \, -\}
\bigl)$, $g\in \mathfrak{g}$ and $x\in V$ by:
\begin{equation}\label{izogal}
\Omega: {\mathbb G}_{\mathfrak{g}}^V \, \bigl( \leftharpoonup, \,
\rightharpoonup, \, \theta, \{-, \, -\} \bigl) \, \to {\rm Gal} \,
(\mathfrak{h}/\mathfrak{g}), \quad \Omega (\sigma, r) (g + x) := g
+ r(x) + \sigma (x)
\end{equation}
In particular, there exists an embedding ${\rm Gal} \,
(\mathfrak{h}/\mathfrak{g}) \hookrightarrow {\rm GL}_k (V) \rtimes
{\rm Hom}_k (V, \, \mathfrak{g})$, where the right hand side is
the semidirect product associated to the canonical right action of
${\rm GL}_k (V)$ on ${\rm Hom}_k (V, \, \mathfrak{g})$.
\end{theorem}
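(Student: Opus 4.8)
The plan is to realize $\Omega$ as the composite of two maps that are already in place, and then to check that this composite is a bijective group homomorphism carrying the stated explicit formula. First I would use the Lie algebra isomorphism $\varphi: \mathfrak{g} \,\natural\, V \to \mathfrak{h}$, $\varphi(g,x) = g+x$, which fixes $\mathfrak{g}\times\{0\}$, to transport automorphisms: conjugation by $\varphi$ defines the group isomorphism $\psi \mapsto \varphi \circ \psi \circ \varphi^{-1}$ from ${\rm Gal}\,(\mathfrak{g}\,\natural\, V/\mathfrak{g})$ onto ${\rm Gal}\,(\mathfrak{h}/\mathfrak{g})$, namely the inverse of the isomorphism recorded in \equref{izogalgen}. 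Precomposing with the parametrization $(\sigma, r) \mapsto \psi_{(\sigma,r)}$ furnished by \cite[Lemma 2.5]{am-2013} produces the candidate $\Omega(\sigma,r) := \varphi \circ \psi_{(\sigma,r)} \circ \varphi^{-1}$.

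Next I would pin down the explicit formula. For $y = g+x$ with $g\in\mathfrak{g}$ and $x\in V = {\rm Ker}(p)$, the retraction identities $p(g)=g$ and $p(x)=0$ give $\varphi^{-1}(g+x) = (g,\,x)$; applying $\psi_{(\sigma,r)}(g,x) = (g+r(x),\,\sigma(x))$ and then $\varphi$ yields precisely $\Omega(\sigma,r)(g+x) = g + r(x) + \sigma(x)$, which is \equref{izogal}. This both confirms the formula and shows $\Omega(\sigma,r)$ is well defined on all of $\mathfrak{h}$, thanks to the decomposition $\mathfrak{h} = \mathfrak{g} \oplus V$ with $\mathfrak{g}\cap V = \{0\}$.

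Then I would check that $\Omega$ is a homomorphism and a bijection. Since conjugation by $\varphi$ is already a group isomorphism, it is enough to know that $(\sigma,r)\mapsto \psi_{(\sigma,r)}$ respects the semidirect product law \equref{grupstrc}; this is exactly the identity $\psi_{(\sigma,r)}\circ\psi_{(\sigma',r')} = \psi_{(\sigma\circ\sigma',\,r\circ\sigma'+r')}$ computed immediately before the statement. Bijectivity is then inherited from the two bijections being composed, since $(\sigma,r)\mapsto\psi_{(\sigma,r)}$ maps ${\mathbb G}_{\mathfrak{g}}^V \, \bigl( \leftharpoonup, \, \rightharpoonup, \, \theta, \{-, \, -\} \bigl)$ bijectively onto ${\rm Gal}\,(\mathfrak{g}\,\natural\, V/\mathfrak{g})$. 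The final embedding assertion is then immediate, because ${\mathbb G}_{\mathfrak{g}}^V \, \bigl( \leftharpoonup, \, \rightharpoonup, \, \theta, \{-, \, -\} \bigl)$ has already been exhibited as a subgroup of ${\mathbb G}_{\mathfrak{g}}^V = {\rm GL}_k(V)\rtimes{\rm Hom}_k(V,\,\mathfrak{g})$.

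The one genuinely substantial step, already discharged by \cite[Lemma 2.5]{am-2013}, is the claim that every element of ${\rm Gal}\,(\mathfrak{g}\,\natural\, V/\mathfrak{g})$ has the form $\psi_{(\sigma,r)}$ and that the axioms (G1)--(G4) are precisely the conditions for $\psi_{(\sigma,r)}$ to be a Lie algebra automorphism. If I had to prove this directly, I would write a general $\mathfrak{g}$-fixing linear endomorphism of $\mathfrak{g}\times V$ in block form, use invariance of $\mathfrak{g}$ together with bijectivity to force the shape $(g,x)\mapsto(g+r(x),\sigma(x))$ with $\sigma\in{\rm GL}_k(V)$ and $r\in{\rm Hom}_k(V,\mathfrak{g})$, and then expand the requirement that this map preserve the unified-product bracket \equref{brackunif}, separating the $\mathfrak{g}$- and $V$-components to recover exactly (G1)--(G4). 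Everything else is a routine assembly of the maps already constructed.
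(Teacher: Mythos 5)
Your proposal is correct and takes essentially the same approach as the paper: the paper also obtains $\Omega$ by composing the conjugation isomorphism \equref{izogalgen} induced by $\varphi(g,x) = g+x$ with the parametrization $(\sigma, r) \mapsto \psi_{(\sigma,r)}$ from \cite[Lemma 2.5]{am-2013}, and verifies the homomorphism property through the same identity $\psi_{(\sigma, \, r)} \circ \psi_{(\sigma', \, r')} = \psi_{(\sigma\circ \sigma', \, r\circ \sigma' + r')}$, with the embedding following from ${\mathbb G}_{\mathfrak{g}}^V \, \bigl( \leftharpoonup, \, \rightharpoonup, \, \theta, \{-, \, -\} \bigl)$ being a subgroup of ${\rm GL}_k (V) \rtimes {\rm Hom}_k (V, \, \mathfrak{g})$. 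Your closing sketch of how one would prove the cited lemma directly goes beyond what the paper records but is consistent with it.
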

In the finite dimensional case we obtain the Lie algebra
counterpart of the fact that the Galois group of a Galois
extension of fields of degree $m$ embeds in the symmetric group
$S_m$.

\begin{corollary}\label{galfinit}
Let $\mathfrak{g} \subseteq \mathfrak{h}$ be an extension of Lie
algebras such that ${\rm dim}_k (\mathfrak{g}) = n$ and ${\rm
dim}_k (\mathfrak{h}) = n + m$. Then the Galois group ${\rm Gal}
\, (\mathfrak{h}/\mathfrak{g})$ embeds in the canonical semidirect
product of groups ${\rm GL} (m, \, k) \rtimes {\rm M}_{n\times m}
(k)$.
\end{corollary}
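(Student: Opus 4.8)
The plan is to deduce this directly from \thref{grupulgal} by rewriting the ambient group ${\rm GL}_k (V) \rtimes {\rm Hom}_k (V, \, \mathfrak{g})$ in matrix form once the relevant dimensions are pinned down.

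First I would fix a linear retraction $p: \mathfrak{h} \to \mathfrak{g}$ of the inclusion and set $V = {\rm Ker} (p)$, as in \thref{grupulgal}. Since $V$ is a complement of $\mathfrak{g}$ in $\mathfrak{h}$, that is $\mathfrak{h} = \mathfrak{g} + V$ and $\mathfrak{g} \cap V = \{0\}$, a dimension count yields ${\rm dim}_k (V) = {\rm dim}_k (\mathfrak{h}) - {\rm dim}_k (\mathfrak{g}) = m$. By \thref{grupulgal} the group ${\rm Gal} \, (\mathfrak{h}/\mathfrak{g})$ embeds into ${\rm GL}_k (V) \rtimes {\rm Hom}_k (V, \, \mathfrak{g})$, so it suffices to identify this semidirect product, as a group, with ${\rm GL} (m, \, k) \rtimes {\rm M}_{n\times m} (k)$.

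Next I would choose a basis of $V$ and a basis of $\mathfrak{g}$. As recalled in \seref{prel}, since ${\rm dim}_k (V) = m$ the first basis identifies ${\rm GL}_k (V)$ with ${\rm GL} (m, \, k)$, sending an automorphism $\sigma$ to its matrix $S$; and with respect to both bases a linear map $r \in {\rm Hom}_k (V, \, \mathfrak{g})$ is represented by an $n \times m$ matrix $R$, giving an additive isomorphism ${\rm Hom}_k (V, \, \mathfrak{g}) \cong {\rm M}_{n\times m} (k)$. I would then verify that these two identifications assemble into an isomorphism of the corresponding semidirect products. The essential point is that the canonical right action $r \triangleleft \sigma = r \circ \sigma$ becomes, in these coordinates, the matrix product $R \triangleleft S := R\,S$, because the matrix of a composite $r \circ \sigma$ is the product of the matrix of $r$ with the matrix of $\sigma$ in this order. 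Hence the group law \equref{grupstrc} translates into $(S, \, R) \cdot (S', \, R') = (S S', \, R S' + R')$, which is exactly the multiplication of the semidirect product ${\rm GL} (m, \, k) \rtimes {\rm M}_{n\times m} (k)$ associated to the right action $R \triangleleft S' = R S'$. Composing the embedding of \thref{grupulgal} with this isomorphism gives the claim.

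The whole argument is a specialization together with a change of coordinates; the only step that demands any attention — and it is entirely routine — is checking that composition of linear maps corresponds to matrix multiplication in the correct order, so that the action, and therefore the group structure, matches on the nose.
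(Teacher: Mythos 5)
Your proposal is correct and is exactly the argument the paper intends: Corollary~\ref{co:galfinit} is stated in the paper without proof as an immediate specialization of Theorem~\ref{th:grupulgal}, relying tacitly on the same identifications ${\rm GL}_k(V) \cong {\rm GL}(m,k)$ and ${\rm Hom}_k(V,\mathfrak{g}) \cong {\rm M}_{n\times m}(k)$ that you carry out. Your check that the canonical action $r \triangleleft \sigma = r \circ \sigma$ becomes $R \triangleleft S = RS$ in coordinates, so that the group law \equref{grupstrc} is preserved, is the one point worth making explicit, and you got the order of the matrix product right.
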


The first examples based on Theorem~\ref{grupulgal} are given
below. More examples and applications will be presented in
Section~\ref{exempleconc}.

\begin{example}\label{priexe}
Consider the extension of Lie algebras $ke_3 \subseteq
\mathfrak{sl} (2, k)$ with the notations of Example~\ref{exgract}
and take $p: \mathfrak{sl} (2, k) \to ke_3$ given by $p(e_1) =
p(e_2) := 0$ and $p (e_3) := e_3$. Then $V = {\rm Ker } (p) = ke_1
+ ke_2$ and the canonical Lie extending system associated to $p$
given by (\ref{can1})$-$(\ref{can2}) takes the following form:
$\rightharpoonup : V \times ke_3 \to V$ and $\{-, - \} : V \times
V \to V$ are both trivial maps, while the action $\leftharpoonup :
V\times ke_3 \to V$ and the cocycle $\theta : V\times V \to ke_3$
are given by $e_1 \leftharpoonup e_3 = -2e_1$, $e_2
\leftharpoonup e_3 = 2e_2$ and $\theta (e_1, \, e_1) = \theta
(e_2, \, e_2) = 0$, $\theta (e_1, \, e_2) = - \theta (e_2, \, e_1)
= e_3$. In particular, this shows that $\mathfrak{sl} (2, k)$ is
isomorphic as a Lie algebra to the skew crossed product $ke_3
\#^{\bullet}\, V$ of the abelian Lie algebras of dimensions one and
two. Now, an element $\sigma \in {\rm GL}_k (V)$ will be written
in the matrix form $\sigma = (\sigma_{ij}) \in {\rm M}_2 (k)$ and
a linear map $r \in {\rm Hom}_k (V, \, ke_3)$ as a family of two
scalars $r = (r_1, \, r_2) \in k^2$ given by $r (e_1) = r_1 e_3$
and $r (e_2) = r_2 e_3$. A straightforward computation proves that
the pair $\bigl( \sigma = (\sigma_{ij}), \, r = (r_1, \, r_2)
\bigl)$ satisfies (G1)$-$(G4) if and only if $r_1 = r_2 = 0$,
$\sigma_{12} = \sigma_{21} = 0$ and $\sigma_{11} \sigma_{22} = 1$.
This proves that the group ${\mathbb G}_{ke_3}^V \, \bigl(
\leftharpoonup, \, \theta\bigl)$ identifies with the group of
units $k^*$ and hence ${\rm Gal} \, (\mathfrak{sl} (2, k)/ke_3)
\cong k^*$. More precisely, $\tau \in {\rm Gal} \, (\mathfrak{sl}
(2, k)/ke_3)$ if and only if there exists $u\in k^*$ such that
$\tau (a e_1 + b e_2 + c e_3) = u a e_1 + u^{-1} b e_2 + c e_3$,
for all $a$, $b$, $c \in k$.
\end{example}

Our next example proves that the Galois group of the extension of
two consecutive Lie Heisenberg algebras is the $2$-dimensional
special affine group ${\rm SL}_{2}(k) \rtimes k^2$.

\begin{example}\label{heis}
Let $n \in \mathbb N^{*}$ and consider $\mathfrak{h}^{2n+1}$ to be
the $(2n+1)$-dimensional Heisenberg Lie algebra having $\{x_{1},
\cdots, x_{n}, \, y_{1}, \cdots, y_{n}, \, w \}$ as a basis and
the bracket given by $[x_{i}, \,y_{i}] = w$, for all $i = 1,
\cdots, n$. If we consider the canonical Lie algebra extension $
\mathfrak{h}^{2n+1} \subset \mathfrak{h}^{2n+3}$, then there
exists an isomorphism of groups:
$$
{\rm Gal} \, ( \mathfrak{h}^{2n+3}/\mathfrak{h}^{2n+1} ) \cong
{\rm SL}_{2}(k) \rtimes k^2
$$
where  ${\rm SL}_{2}(k) \rtimes k^2$ is the semidirect product of
groups corresponding to the canonical right action $\triangleleft:
k^2 \times {\rm SL}_{2}(k) \to k^2$ given by $(a, b) \triangleleft
B = (a, b) B$, for all $(a, \, b) \in k^2$, $B \in {\rm
SL}_{2}(k)$. To start with, we point out that $\mathfrak{h}^{2n+3}
$ can be realized as a unified product between
$\mathfrak{h}^{2n+1} $ and the vector space $V$ with $k$ basis
$\{x_{n+1},\, y_{n+1}\}$ corresponding to the Lie extending system
with one non-trivial map, namely $\theta : V\times V \to
\mathfrak{h}^{2n+1}$ given by $\theta(x_{n+1}, \, y_{n+1}) = w$.
The conclusion now follows by applying Theorem~\ref{grupulgal}.
First notice that any pair $(\sigma, \, r) \in {\rm GL}_k (V)
\times {\rm Hom}_k (V, \, \mathfrak{g})$ fulfills trivially the
compatibility conditions (G1) and (G3). Furthermore, the
compatibility condition (G2) yields $r(x_{n+1}) = \alpha\, w$ and
$r(y_{n+1}) = \beta\, w$ for some $\alpha$, $\beta \in k$.
Finally, if we denote $\sigma(x_{n+1}) = a x_{n+1} + b x_{n+1}$
and respectively $\sigma(y_{n+1}) = c x_{n+1} + d x_{n+1}$ for
some $a$, $b$, $c$, $d \in k$, the compatibility condition (G4)
gives $ad-bc=1$. Therefore, the set of pairs $(\sigma, \, r) \in
{\rm GL}_k (V) \times {\rm Hom}_k (V, \, \mathfrak{g})$ satisfying
(G1)$-$(G4) is in fact equal to ${\rm SL}_{2}(k) \times k^2$. The
proof is now finished by identifying the maps $\sigma \in {\rm
GL}_k (V)$, $r \in {\rm Hom}_k (V, \, \mathfrak{g})$ with their
corresponding matrices in ${\rm SL}_{2}(k)$ respectively $k^2$ and
noticing that the multiplication on ${\mathbb
G}_{\mathfrak{h}^{2n+1}}^{V} \, (\theta)$ comes down to that
corresponding to the semidirect product induced by the action
defined above.
\end{example}

Now we provide an example of a Lie algebra extension having a
metabelian Galois group.

\begin{example}\label{sigmaex}
For a positive integer $n$, let $\mathfrak{l}(2n+1)$ be the
metabelian Lie algebra with basis $\{E_{i}, \, F_{i}, \, G ~|~ i =
1, \cdots, n\}$ and bracket given by $[E_{i}, \, G] = E_{i}$, $[G,
F_{i}] = F_{i}$, for all $i = 1, \cdots, n$. Then there exists an
isomorphism of groups:
$$
{\rm Gal} \, (\mathfrak{l}(2n+3) /\mathfrak{l}(2n+1)) \cong
(k^{*}\times k^{*}) \rtimes \mathcal{M}_{n\times 2}(k)
$$
where  $(k^{*}\times k^{*}) \rtimes \mathcal{M}_{n\times 2}(k)$ is
the semidirect product corresponding to the right action
$\triangleleft: \mathcal{M}_{n\times 2}(k) \times (k^{*}\times
k^{*}) \to \mathcal{M}_{n \times 2}(k)$ given by $B \triangleleft
(a\,b) = B
\begin{pmatrix} a & 0\\ 0 & b \end{pmatrix}$, for all $a$, $b \in
k^{*}$ and $B \in \mathcal{M}_{n \times 2}(k)$. Indeed,
$\mathfrak{l}(2n+3)$ can be written as a unified product between
$\mathfrak{l}(2n+1)$ and the vector space $V$ with basis
$\{E_{n+1},\, F_{n+1}\}$ corresponding to the extending structure
with only one non-trivial map, namely $\rightharpoonup: V \times
\mathfrak{l}(2n+1) \to V$ given by $E_{n+1} \rightharpoonup G =
E_{n+1}$ and $F_{n+1} \rightharpoonup G = -F_{n+1}$. Now if
$(\sigma, \, r) \in {\rm GL}_k (V) \times {\rm Hom}_k (V, \,
\mathfrak{l}(2n+1))$ a careful analysis of the compatibility
conditions (G1)$-$(G4) yields:
\begin{eqnarray*}
\sigma(E_{n+1}) = aE_{n+1},&& \sigma(F_{n+1}) = bF_{n+1},\,\, a,\, b \in k,\,\, ab \neq 0\\
r(E_{n+1}) = \sum_{i=1}^{n} \alpha_{i} \, E_{i},&& r(F_{n+1}) =
\sum_{i=1}^{n} \beta_{i} \, F_{i},\,\, \alpha_{i}, \beta_{i} \in
k,\, i = 1, 2, \cdots n.
\end{eqnarray*}
Thus, the pairs $(\sigma, \, r) \in {\rm GL}_k (V) \times {\rm
Hom}_k (V, \, \mathfrak{l}(2n+1))$ are parameterized by
$(k^{*}\times k^{*}) \times \mathcal{M}_{n\times 2}(k)$ and the
conclusion follows by Theorem~\ref{grupulgal}.
\end{example}

In the sequel we consider three general examples.

\begin{example}\label{semidirecgal}
Let $\mathfrak{g} \rtimes \mathfrak{h}$ be a semidirect product of
two Lie algebras $\mathfrak{g}$ and $\mathfrak{h}$ written in the
right hand side convention as indicated in
Example~\ref{produssucit}: thus, $\mathfrak{g} \rtimes
\mathfrak{h}$ is associated to a right action of $\mathfrak{g}$ on
$\mathfrak{h}$ denoted by $\leftharpoonup:  \mathfrak{h} \times
\mathfrak{g} \to \mathfrak{h}$. If $\mathfrak{g}$ is abelian and
$\mathfrak{h}$ is a perfect Lie algebra (i.e. $\mathfrak{h} =
[\mathfrak{h}, \, \mathfrak{h}]$), then there exists an
isomorphism of groups:
$$
{\rm Gal} \, (\mathfrak{g} \rtimes \mathfrak{h}/\mathfrak{g})
\cong {\rm Aut}_{\rm Lie}^{\leftharpoonup}(\mathfrak{h})
$$
where ${\rm Aut}_{\rm Lie}^{\leftharpoonup}(\mathfrak{h})$ denotes
the set of Lie algebra automorphisms of $\mathfrak{h}$ which are
also right Lie $\mathfrak{g}$-module maps, i.e. ${\rm Aut}_{\rm
Lie}^{\leftharpoonup}(\mathfrak{h}) = \{u \in {\rm Aut}_{\rm
Lie}(\mathfrak{h}) ~|~ u(x \leftharpoonup g) = u(x) \leftharpoonup
g \, {\rm for}\, {\rm all}\,  g \in \mathfrak{h}, \, x \in
\mathfrak{h}\}$. We just apply Theorem~\ref{grupulgal}. Indeed,
let $(\sigma, \, r) \in {\rm GL}_k (V) \times {\rm Hom}_k (V, \,
\mathfrak{g})$ satisfying the compatibility conditions
(G1)$-$(G4). As $\mathfrak{g}$ is abelian (G2) comes down to
$r\bigl(\{x, \, y\}\bigl) = 0$ for all $x$, $y \in \mathfrak{h}$.
Now since $\mathfrak{h}$ is a perfect Lie algebra we obtain $r=0$.
Then (G2) is trivially fulfilled while (G1) and (G3) imply that
$\sigma$ is a right Lie $\mathfrak{g}$-module map respectively a
Lie algebra map.
\end{example}

The next example computes the Galois group of the extension
$\mathfrak{g}' \subset \mathfrak{g}$ for a special class of Lie
algebras $\mathfrak{g}$, namely the non-perfect ones with
$C_{\mathfrak{g}}(\mathfrak{g}') = \{0\}$, where $\mathfrak{g}' =
[\mathfrak{g}, \, \mathfrak{g}]$ is the derived algebra of
$\mathfrak{g}$ and $C_{\mathfrak{g}}(\mathfrak{g}')$ denotes the
centralizer of $\mathfrak{g}'$ in $\mathfrak{g}$. A generic
example of such a Lie algebra is for instance $\mathfrak{g} :=
\mathfrak{g}\mathfrak{l} (n, \, k) \rtimes k^{n}$, the semidirect
product of Lie algebras corresponding to the canonical action of
$\mathfrak{g}\mathfrak{l} (n, \, k)$ on $k^{n}$.

\begin{example}
Let $\mathfrak{g}$ be a non-perfect Lie algebra such that
$C_{\mathfrak{g}}(\mathfrak{g}') = \{0\}$. Then there exists an
isomorphism of groups
$$
{\rm Gal} \, (\mathfrak{g} /\mathfrak{g}') \cong {\rm GL}_k (V)
$$
where $V$ is a complement as vector spaces of $\mathfrak{g}'$ in
$\mathfrak{g}$. First we write $\mathfrak{g}$ as the unified product between
$\mathfrak{g}'$ and $V$ associated to the Lie extending system
whose non-trivial maps are given as follows: $x \rightharpoonup g
= [x,\,g]$ and $\theta(x,\, y) = [x,\,y]$, for all $g \in
\mathfrak{g}'$, $x$, $y \in V$. Now let $(\sigma, \, r) \in {\rm
GL}_k (V) \times {\rm Hom}_k (V, \, \mathfrak{g})$ satisfying the
compatibility conditions (G1)$-$(G4). One can easily see that (G1)
and (G3) are trivially fulfilled while (G2) and (G4) come down to
the following compatibilities:
\begin{eqnarray*}
\left[r(x) + \sigma(x) - x,\, g\right] = 0, \qquad \left[r(x) +
\sigma(x), \, r(y) + \sigma(y) \right] = \left[x, \, y \right]
\end{eqnarray*}
for all $g \in \mathfrak{g}'$, $x$, $y \in V$. We obtain that
$r(x) + \sigma(x) - x \in C_{\mathfrak{g}}(\mathfrak{g}') =
\{0\}$, for all $x \in V$. Thus $r = {\rm Id}_{V} - \sigma$ and
hence the second equation is now trivially fulfilled. Therefore,
the pairs $(\sigma, \, r) \in {\rm GL}_k (V) \times {\rm Hom}_k
(V, \, \mathfrak{g})$ satisfying the compatibility conditions
(G1)$-$(G4) are of the form $(\sigma, \, {\rm Id}_{V} - \sigma)$
with $\sigma \in {\rm GL}_k (V)$. In this case the multiplication
given by equation~(\ref{grupstrc}) becomes $(\sigma, \, {\rm Id}_{V}
- \sigma) \cdot (\sigma', \, {\rm Id}_{V} - \sigma') = (\sigma
\circ \sigma ',\, {\rm Id}_{V} - \sigma \circ \sigma ')$ and thus
${\mathbb G}_{\mathfrak{g'}}^{V} \, (\rightharpoonup,\, \theta)$
is isomorphic to ${\rm GL}_k (V)$. The conclusion now follows from
Theorem~\ref{grupulgal}.
\end{example}

Let $\mathfrak{H}(\mathfrak{g})$ be the \textit{holomorph}
\cite{SZ} Lie algebra of a Lie algebra $\mathfrak{g}$, i.e.
$\mathfrak{H}(\mathfrak{g}) = \mathfrak{g} \times {\rm
Der}(\mathfrak{g})$ endowed with the Lie bracket given by:
$\left[(g, \, \varphi),\, (h, \, \psi)\right] = \big\{[g, \, h] +
\varphi(h) - \psi(g), \, [\varphi, \, \psi] \big\}$, for all $g$,
$h \in \mathfrak{g}$ and $\varphi$, $\psi \in {\rm
Der}(\mathfrak{g})$.

\begin{example}
Let $\mathfrak{g}$ be a complete Lie algebra (i.e. $\mathfrak{g}$ has trivial center and only inner derivations; see \cite{jacobson} for further details). Then
there exists an isomorphism of groups:
$$
{\rm Gal} \, (\mathfrak{H}(\mathfrak{g}) /\mathfrak{g}) \cong {\rm
Aut}_{{\rm Lie}} (\mathfrak{g})
$$
To start with we point out that since $\mathfrak{g}$ is complete
all derivations are inner, i.e. ${\rm Der}(\mathfrak{g}) = \{{\rm
ad}_{x} ~|~ x \in \mathfrak{g}\}$. It can be easily seen that
$\mathfrak{h}(\mathfrak{g})$ is a unified product between
$\mathfrak{g}$ and ${\rm Der}(\mathfrak{g})$ corresponding to the
extending system whose non-trivial maps are given as follows:
\begin{eqnarray*}
{\rm ad}_{x} \rightharpoonup g = [x,\,g], \qquad \{{\rm ad}_{x},\,
{\rm ad}_{y}\} = {\rm ad}_{[x,\,y]}
\end{eqnarray*}
for all $g$, $x$, $y \in \mathfrak{g}$. Consider now $(\sigma, \,
r) \in {\rm GL}_k ({\rm Der} (\mathfrak{g})) \times {\rm Hom}_k
({\rm Der} (\mathfrak{g}), \, \mathfrak{g})$ satisfying the
compatibility conditions (G1)$-$(G4). As for any $x \in
\mathfrak{g}$ we have $\sigma({\rm ad}_{x}) \in {\rm Der}
(\mathfrak{g})$ it follows that $\sigma({\rm ad}_{x}) = {\rm
ad}_{\tau(x)}$ for some bijective linear map $\tau : \mathfrak{g}
\to \mathfrak{g}$. One can easily see that (G1) is trivially
fulfilled, (G3) comes down to $\tau$ being a Lie algebra map
while(G2) yields:
\begin{eqnarray*}
\left[r(ad_{x}) + \tau(x) - x, g \right] = 0, \,\, {\rm for}\,\,
{\rm all}\,\, x,\, g \in \mathfrak{g}
\end{eqnarray*}
Hence $r(ad_{x}) + \tau(x) - x \in Z(\mathfrak{g})$ for all $x \in
\mathfrak{g} $, where $Z(\mathfrak{g})$ denotes the center of the
Lie algebra $\mathfrak{g}$; as $\mathfrak{g}$ is complete
we obtain $r({\rm ad}_{x}) = x -
\tau(x)$ for all $x \in \mathfrak{g}$. Under these assumptions
(G4) is also trivially fulfilled. To summarize, we proved that any
pair $(\sigma, \, r) \in {\rm GL}_k ({\rm Der} (\mathfrak{g}))
\times {\rm Hom}_k ({\rm Der} (\mathfrak{g}), \, \mathfrak{g})$
satisfying the compatibility conditions (G1)$-$(G4) is implemented
by a Lie algebra automorphism $\tau : \mathfrak{g} \to
\mathfrak{g}$ as follows:
\begin{eqnarray*}
\sigma({\rm ad}_{x}) = {\rm ad}_{\tau(x)}, \quad r({\rm ad}_{x}) =
x - \tau(x)
\end{eqnarray*}
for all $x \in \mathfrak{g}$. An easy computation shows that the
map which sends each pair $(\sigma, \, r) \in {\rm GL}_k ({\rm
Der} (\mathfrak{g})) \times {\rm Hom}_k ({\rm Der} (\mathfrak{g}),
\, \mathfrak{g})$ to the corresponding Lie algebra automorphism
$\tau : \mathfrak{g} \to \mathfrak{g}$ is a group automorphism
between ${\mathbb G}_{\mathfrak{g}}^{{\rm Der} (\mathfrak{g})} \,
(\rightharpoonup,\, \theta)$ and ${\rm Aut}_{{\rm Lie}}
(\mathfrak{g})$. Now Theorem~\ref{grupulgal} is the last step in
obtaining the desired conclusion.
\end{example}

We now specialize the discussion to extensions of the form
$\mathfrak{h}^G \subseteq \mathfrak{h}$, where $G$ is a group
acting on a Lie algebra $\mathfrak{h}$. Our approach has Artin's theorem \cite[Theorem 1.8]{Lang} as source
of inspiration: if $ G
\leq {\rm Aut} (K)$ is a finite group of automorphisms of a field
$K$, then $K$ is isomorphic to a crossed product algebra $k \#_{\sigma} \, k[G]^*$
between the field of invariants $k = K^G$ and the dual algebra of
the group algebra $k[G]$, associated to some cocycle $\sigma : k[G]^* \otimes k[G]^* \to k$. In what follows we will prove the Lie
algebra counterpart of this very important result. Let $G$ be a
finite group whose order $|G|$ is invertible in $k$ and suppose
$G$ is acting on the Lie algebra $\mathfrak{h}$ via the group morphism $\varphi:
G \to {\rm Aut}_{\rm Lie} (\mathfrak{h})$, $\varphi (g) (x) = g
\triangleright x$, for all $g\in G$ and $x\in \mathfrak{h}$. Our
goal is to describe the Galois group ${\rm Gal} \,
(\mathfrak{h}/\mathfrak{h}^G)$ and to rebuild $\mathfrak{h}$ from
the subalgebra of invariants $\mathfrak{h}^G$ and an extra set of
data. We mention that if $G \leq {\rm Aut}_{\rm Lie}
(\mathfrak{h})$ is a subgroup of the Lie algebra automorphism of
$\mathfrak{h}$ acting on $\mathfrak{h}$ via the canonical action
$\sigma \triangleright x : = \sigma (x)$, for all $\sigma \in G
\leq {\rm Aut}_{\rm Lie} (\mathfrak{h})$, then we have $G
\subseteq {\rm Gal} \, (\mathfrak{h}/\mathfrak{h}^G)$ - as opposed
to the classical Artin's theorem, we will se that for Lie algebras
we are far from having equality in the inclusion $G \subseteq {\rm
Gal} \, (\mathfrak{h}/\mathfrak{h}^G)$. Since $|G|$ is invertible
in $k$, we can choose  the trace map $t : \, \mathfrak{h} \to
\mathfrak{h}^G$ defined by $t (x) := |G|^{-1} \, \sum_{\gamma \in
G} \, \gamma \triangleright x$, for all $x\in \mathfrak{h}$ as a
linear retraction of the inclusion $\mathfrak{h}^G \hookrightarrow
\mathfrak{h}$. We shall compute the canonical extending system of
$\mathfrak{h}^G$ through $V : = {\rm Ker} (t)$ associated to the
trace map $t$ using the formulas (\ref{can1})$-$(\ref{can2}). For any
$x\in V$ and $g\in \mathfrak{h}^G$ we have:
\begin{eqnarray*}
x \rightharpoonup g &=& t ([x, \, g]) = |G|^{-1} \, \sum_{\gamma
\in G} \, \gamma \triangleright [x, \, g] = |G|^{-1} \,
\sum_{\gamma \in G} \, [\gamma \triangleright x, \, \gamma
\triangleright g] \\
&=& |G|^{-1} \, \sum_{\gamma \in G} \, [\gamma \triangleright x,
\, g] = [t(x), \, g] = 0
\end{eqnarray*}
where the equalities in the last line follow from $g \in \mathfrak{h}^G$
and $x\in V = {\rm Ker} (t)$, respectively. Moreover, we can easily see that the
action $\leftharpoonup $, the cocycle $\theta$ and the
quasi-bracket $\{-, \, -\}$ on $V$ take the form:
\begin{eqnarray}
&& x \leftharpoonup g = [x, \, g], \quad \theta (x, \, y) =
|G|^{-1} \, \sum_{\gamma \in G} \, [\gamma \triangleright x, \,
\gamma \triangleright y] \label{formagract}\\
&& \{x, \, y\} = [x, \, y] - |G|^{-1} \, \sum_{\gamma \in G} \,
[\gamma \triangleright x, \, \gamma \triangleright y]
\label{formagractb}
\end{eqnarray}
for all $x$, $y\in V$ and $g \in \mathfrak{h}^G$. The left action
$\rightharpoonup$ being the trivial map has an important
consequence: using Example~\ref{produssucit} it follows that the
unified product $\mathfrak{h}^G \, \natural \, V $ associated to
this canonical extending system of $\mathfrak{h}^G$ by $V$ reduces
to a skew crossed product $\mathfrak{h}^G \, \#^{\bullet} \, V $
and the map defined for any $g\in \mathfrak{h}^G$ and $x\in V$ by:
\begin{equation}\label{recizo}
\varphi: \mathfrak{h}^G \, \#^{\bullet} \, V \to \mathfrak{h},
\qquad \varphi(g, x) := g + x
\end{equation}
is an isomorphism of Lie algebras. The Lie bracket on
$\mathfrak{h}^G \, \#^{\bullet} \, V$ given by
equation~(\ref{brackskcp}) takes the following form:
\begin{eqnarray}
[(g, \, x), \, (g', \, x')] &:=& \Bigl( [g, \, g'] + |G|^{-1} \,
\sum_{\gamma \in G} \, [\gamma \triangleright x, \, \gamma
\triangleright x'], \nonumber \\
&& [x, \, x'] - |G|^{-1} \, \sum_{\gamma \in G} \, [\gamma
\triangleright x, \, \gamma \triangleright x'] + [x, \, g'] - [x',
\, g] \Bigl)\label{bracsc}
\end{eqnarray}
for all $g$, $g' \in \mathfrak{h}^G$ and $x$, $x' \in V$. Given a
group $G$ acting on a Lie algebra $\mathfrak{h}$, the isomorphism
given in equation~(\ref{recizo}) provides the reconstruction of
$\mathfrak{h}$ from the subalgebra of invariants $\mathfrak{h}^G$.
We continue our investigation in order to describe the Galois
group ${\rm Gal} \, (\mathfrak{h}/\mathfrak{h}^G)$. Since the
components of the canonical extending system given by equations~
(\ref{formagract})$-$(\ref{formagractb}) are implemented only by the
action $\varphi$ of $G$ on $\mathfrak{h}$ we shall denote the
group ${\mathbb G}_{\mathfrak{h}^G}^V \, \bigl( \leftharpoonup, \,
\rightharpoonup, \, \theta, \{-, \, -\} \bigl)$ constructed in
Theorem~\ref{grupulgal} by ${\mathbb G}_{\mathfrak{h}^G}^V \,
\bigl( \varphi \bigl)$. Thus ${\mathbb G}_{\mathfrak{h}^G}^V \,
\bigl( \varphi \bigl)$ consist of the set of all pairs $(\sigma,
\, r) \in {\rm GL}_k (V) \times {\rm Hom}_k (V, \,
\mathfrak{h}^G)$ satisfying the following compatibility conditions
for all $g \in \mathfrak{h}^G$ and $x$, $y\in V$:
\begin{eqnarray*}
&& \sigma ([x, \, g]) = [\sigma(x), \, g], \qquad r ([x, \, g]) =
[r(x), \, g] \\
&& \sigma ([x, \, y]) - [\sigma(x), \, \sigma(y) ] = [\sigma(x),
\, r(y) ] + [r(x), \, \sigma(y) ] + \\
&& + |G|^{-1} \, \sum_{\gamma \in G} \, \sigma \bigl( [\gamma
\triangleright x, \, \gamma \triangleright y] \bigl) - |G|^{-1} \,
\sum_{\gamma \in G} \, [\gamma \triangleright \sigma(x), \, \gamma \triangleright \sigma(y)] \\
&& r ([x, \, y]) - [r(x), \, r(y) ] =  |G|^{-1} \, \sum_{\gamma
\in G} \, r \bigl( [\gamma \triangleright x, \, \gamma \triangleright y] \bigl) + \\
&& + |G|^{-1} \, \sum_{\gamma \in G} \, [\gamma \triangleright
\sigma(x), \, \gamma \triangleright \sigma(y)] - |G|^{-1} \,
\sum_{\gamma \in G} \, [\gamma \triangleright x, \, \gamma
\triangleright y]
\end{eqnarray*}
which is exactly what is left from axioms (G1)$-$(G4) after using
equations~(\ref{formagract})$-$(\ref{formagractb}) and the fact that
$\rightharpoonup$ is the trivial action. We note that the first
two compatibilities above show that $\sigma$ and $r$ are morphisms
of right Lie $\mathfrak{h}^G$-modules while the last two
compatibilities measures how far they are from being Lie algebra
maps. ${\mathbb G}_{\mathfrak{h}^G}^V \, \bigl( \varphi \bigl)$ is
a group with multiplication given by equation~(\ref{grupstrc}).
We record all this facts in the following:

\begin{theorem} \textbf{(Artin's Theorem for Lie algebras)}\label{recsiGal}
Let $G$ be a finite group of invertible order in $k$ acting on a
Lie algebra $\mathfrak{h}$ via $\varphi: G \to {\rm Aut}_{\rm Lie}
(\mathfrak{h})$. Let $\mathfrak{h}^G \subseteq \mathfrak{h}$ be
the subalgebra of invariants and $V = {\rm Ker} (t)$, where $t:
\mathfrak{h} \to \mathfrak{h}^G$ is the trace map. Then:

(1) The map defined for any $g\in \mathfrak{h}^G$ and $x\in V$ by:
\begin{equation}\label{recizoaa}
\varphi: \mathfrak{h}^G \, \#^{\bullet} \, V \to \mathfrak{h},
\qquad \varphi(g, x) := g + x
\end{equation}
is an isomorphism of Lie algebras, where $\mathfrak{h}^G \, \#^{\bullet} \,
V $ is the skew crossed product of Lie algebras having the bracket
given by (\ref{bracsc}).

(2) The map defined for any $(\sigma, r) \in {\mathbb
G}_{\mathfrak{h}^G}^V \, \bigl( \varphi \bigl)$, $g \in
\mathfrak{h}^G$ and $x\in V$ by:
\begin{equation}\label{izogalaa}
\Omega: {\mathbb G}_{\mathfrak{h}^G}^V \, \bigl( \varphi \bigl) \,
\to {\rm Gal} \, (\mathfrak{h}/\mathfrak{h}^G), \quad \Omega
(\sigma, r) (g + x) := g + r(x) + \sigma (x)
\end{equation}
is an isomorphism of groups.
\end{theorem}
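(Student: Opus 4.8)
The plan is to read both parts off from \thref{grupulgal} after making the single deliberate choice $p := t$ for the linear retraction, so that $V = {\rm Ker}(t)$ becomes the complement of $\mathfrak{h}^G$ in $\mathfrak{h}$. With this choice the entire canonical extending system of $\mathfrak{h}^G$ through $V$ is computed directly from the averaging formulas \equref{can1}--\equref{can2}, and the only genuinely substantive step is to observe that the left action $\rightharpoonup$ collapses to the trivial map.

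For part (1), the key computation is that $x \rightharpoonup g = t([x,g]) = 0$ for all $x \in V$ and $g \in \mathfrak{h}^G$. This follows by combining the $G$-equivariance of the bracket with the $G$-invariance of $g$: for each $\gamma \in G$ one has $\gamma \triangleright [x,g] = [\gamma \triangleright x, \, \gamma \triangleright g] = [\gamma \triangleright x, \, g]$, so averaging over $G$ yields $t([x,g]) = [t(x), \, g] = 0$ because $x \in {\rm Ker}(t)$. Once $\rightharpoonup = 0$, \exref{produssucit} applies verbatim: the unified product $\mathfrak{h}^G \, \natural \, V$ degenerates to the skew crossed product $\mathfrak{h}^G \, \#^{\bullet} \, V$, with $\leftharpoonup$, $\theta$ and $\{-, \, -\}$ read off from \equref{can1}--\equref{can2} in the explicit forms \equref{formagract}--\equref{formagractb}, producing the bracket \equref{bracsc}. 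The isomorphism $\varphi(g,x) = g+x$ is then exactly the Lie algebra isomorphism furnished by \thref{grupulgal}, whose inverse specializes to $y \mapsto \bigl(t(y), \, y - t(y)\bigl)$; this settles (1).

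For part (2), I would invoke \thref{grupulgal} directly. That theorem produces an isomorphism of groups $\Omega$ from ${\mathbb G}_{\mathfrak{h}^G}^V \, \bigl(\leftharpoonup, \, \rightharpoonup, \, \theta, \{-, \, -\}\bigl)$ onto ${\rm Gal}(\mathfrak{h}/\mathfrak{h}^G)$, given by the same formula $\Omega(\sigma, r)(g+x) = g + r(x) + \sigma(x)$ as in \equref{izogal}. Since every component of the extending system is determined by the action $\varphi$ of $G$, this source group is precisely what we have denoted ${\mathbb G}_{\mathfrak{h}^G}^V \, \bigl(\varphi\bigl)$. It then remains only to substitute $\rightharpoonup = 0$ together with the explicit forms of $\leftharpoonup$ and $\theta$ into (G1)--(G4); a routine rewriting turns them into the four displayed compatibilities preceding the statement, confirming that the two descriptions of the group coincide.

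I do not anticipate a serious obstacle, since the heavy lifting is carried out once and for all by \thref{grupulgal} and by the structure theory of unified products. The only point demanding real attention is the vanishing $x \rightharpoonup g = 0$, which is what forces the product to be skew rather than a general unified product; everything else is bookkeeping, with care needed chiefly in checking that the simplified (G1)--(G4) match the stated compatibilities. Verifying that the cocycle $\theta$ and the quasi-bracket $\{-, \, -\}$ satisfy (T1)--(T5) requires no extra work, as the canonical extending system of any linear retraction automatically does so by \thref{grupulgal}.
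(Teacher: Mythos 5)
Your proposal is correct and follows essentially the same route as the paper: the paper likewise chooses the retraction $p := t$, shows $x \rightharpoonup g = t([x,\,g]) = [t(x),\,g] = 0$ by exactly the same $G$-equivariance-plus-averaging computation, invokes \exref{produssucit} to collapse the unified product to the skew crossed product $\mathfrak{h}^G \, \#^{\bullet} \, V$ with $\leftharpoonup$, $\theta$, $\{-,\,-\}$ given by \equref{formagract}--\equref{formagractb}, and obtains part (2) by specializing \thref{grupulgal} and rewriting (G1)--(G4) with $\rightharpoonup = 0$. There is nothing to add; your identification of the vanishing of $\rightharpoonup$ as the only substantive step matches the paper's own emphasis.
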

Even if $G$ is a finite subgroup of ${\rm
Aut}_{\rm Lie} (\mathfrak{h})$, Theorem~\ref{recsiGal} $(2)$ shows that we are far away from having ${\rm
Gal} \, (\mathfrak{h}/\mathfrak{h}^G) \cong G$ as in the case of
fields. We present a relevant example below:

\begin{example}\label{exrelmare}
Let $k$ be a field of characteristic $\neq 2$ and $\varphi: k^*
\to {\rm Aut}_{\rm Lie} \bigl( \mathfrak{sl} (2, k) \bigl)$ the
action of $k^*$ on $\mathfrak{sl} (2, k)$ given by
equation~(\ref{sl2}). The subalgebra of invariants $\mathfrak{sl}
(2, k)^{k^*}$ of this action is just $ke_3$ and
Example~\ref{priexe} shows that the Galois group ${\rm Gal} \, (\mathfrak{sl} (2,
k)/ \mathfrak{sl} (2, k)^{k^*})$ is isomorphic to $k^*$. This situation occurs
rarely. Indeed, if we consider $G := U_2 (k) = \{ \pm 1 \}$ the
cyclic subgroup of $k^*$ of roots of unity of order two and the
same action of $U_2 (k) \leq k^*$ on $\mathfrak{sl} (2, k)$ as
above we obtain the same subalgebra of invariants, namely
$\mathfrak{sl} (2, k)^{U_2 (k)} = ke_3$. Hence we have that ${\rm
Gal} \, (\mathfrak{sl} (2, k)/ \mathfrak{sl} (2, k)^{U_2 (k) })
\cong k^* \neq U_2(k)$.
\end{example}

A crucial step in applying Theorem~\ref{recsiGal} is the
description of the kernel of the trace map $t: \mathfrak{h} \to
\mathfrak{h}^G$, which heavily depends of the group $G$ and on the
action $\varphi$. In the case of cyclic groups acting on fields
this kernel is described by Hilbert's theorem \cite[Theorem
6.3]{Lang}. As a nice surprise its counterpart for Lie algebras is
also true but the proof uses completely different techniques.

\begin{theorem}\textbf{(Hilbert's Theorem 90 for Lie algebras)}\label{Hilbert90}
Let $G$ be a finite cyclic group generated by an element $\gamma$
whose order $n$ is invertible in $k$. Let $\varphi: G \to {\rm
Aut}_{\rm Lie} (\mathfrak{h})$ be a morphism of groups and $t :
\mathfrak{h} \to \mathfrak{h}^G$ the trace map. Then $ {\rm Ker}
(t) = \{ y - \gamma \triangleright y \, | \, y \in
\mathfrak{h}\}$.
\end{theorem}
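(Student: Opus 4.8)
The plan is to discard the Lie algebra structure entirely and prove the purely linear-algebraic statement about the single operator $\tau := \varphi(\gamma) \in {\rm GL}_k(\mathfrak{h})$. Since $\varphi$ is a morphism of groups and $\gamma^n = 1$ in $G$, we have $\tau^n = \varphi(\gamma^n) = \varphi(1) = \mathrm{id}_{\mathfrak{h}}$, and the trace map can be rewritten as $t = \frac{1}{n}\, q(\tau)$, where $q(X) := 1 + X + \cdots + X^{n-1} \in k[X]$. Moreover the set $\{\,y - \gamma \triangleright y \mid y \in \mathfrak{h}\,\}$ is exactly the image $\im(\mathrm{id}_{\mathfrak{h}} - \tau)$. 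Thus the theorem reduces to the identity ${\rm Ker}(t) = \im(\mathrm{id}_{\mathfrak{h}} - \tau)$, which I will establish by a double inclusion.

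The inclusion $\im(\mathrm{id}_{\mathfrak{h}} - \tau) \subseteq {\rm Ker}(t)$ is the easy half. A reindexing of the defining sum gives $t \circ \tau = t$: indeed $n\,(t\circ\tau) = \sum_{i=0}^{n-1}\tau^{i+1} = \sum_{i=1}^{n}\tau^i = q(\tau) - \mathrm{id}_{\mathfrak{h}} + \tau^n = q(\tau) = n\,t$, using $\tau^n = \mathrm{id}_{\mathfrak{h}}$. Consequently $t\circ(\mathrm{id}_{\mathfrak{h}} - \tau) = t - t\circ\tau = 0$, so every element of the form $y - \gamma\triangleright y$ lies in ${\rm Ker}(t)$.

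For the nontrivial inclusion ${\rm Ker}(t) \subseteq \im(\mathrm{id}_{\mathfrak{h}} - \tau)$ the key step is a Bézout identity in $k[X]$. Since $q(1) = n$ is invertible in $k$, the polynomial $q(X)$ reduces to the nonzero constant $n$ modulo $X - 1$, so $X-1$ and $q(X)$ are coprime; concretely one writes $1 = \tfrac{1}{n}q(X) - (X-1)\,h(X)$ with $h(X) = \bigl(\tfrac1n q(X) - 1\bigr)/(X-1) \in k[X]$. Substituting $\tau$ for $X$ and recalling $\tfrac1n q(\tau) = t$, I obtain the operator identity $\mathrm{id}_{\mathfrak{h}} = t - (\tau - \mathrm{id}_{\mathfrak{h}})\,h(\tau)$. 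Applying both sides to an arbitrary $x \in {\rm Ker}(t)$ and using $t(x) = 0$ yields $x = (\mathrm{id}_{\mathfrak{h}} - \tau)\bigl(h(\tau)(x)\bigr)$, that is $x = y - \gamma\triangleright y$ with $y := h(\tau)(x)$. This exhibits $x$ as a member of the right-hand set and finishes the argument.

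The computations will be routine; the only point requiring a moment of thought is to resist decomposing $\mathfrak{h}$ into eigenspaces of $\tau$ (which would force a finite-dimensionality or algebraic-closure hypothesis that the statement does not make) and instead to argue through the polynomial identity, which is valid for an arbitrary, possibly infinite-dimensional $\mathfrak{h}$ and uses nothing beyond $\tau^n = \mathrm{id}_{\mathfrak{h}}$ and the invertibility of $n = q(1)$ in $k$. Equivalently, one may phrase the same argument through the Chinese Remainder decomposition of $k[X]/(X^n - 1)$ --- which is separable precisely because $n \in k^\ast$ --- in which $t$ becomes the idempotent cutting out the $(X-1)$-factor while $\mathrm{id}_{\mathfrak{h}} - \tau$ is invertible on the complementary factors; I prefer the Bézout form since it produces the witness $y$ explicitly.
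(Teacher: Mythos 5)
Your proof is correct, but it takes a genuinely different route from the paper's. The paper argues by a recursive device: given $a \in \Ker(t)$ it defines the maps $d_0(y) := a + y$, $d_{i+1}(y) := a + \gamma \triangleright d_i(y)$, shows $d_n = d_0$ using $t(a) = 0$ and $\gamma^n = 1$, and sums the recursion to obtain $\sum_{i=0}^{n-1} d_i = n\,a + \gamma \triangleright \bigl(\sum_{i=0}^{n-1} d_i\bigr)$, whence $a = n^{-1}\bigl(y - \gamma \triangleright y\bigr)$ with $y := \sum_{i=0}^{n-1} d_i(z)$ for a suitable $z$ (preceded by a separate, and in fact unnecessary, case split according to whether $\sum_i d_i$ is the zero map). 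You instead reduce everything to the single operator $\tau = \varphi(\gamma)$ with $\tau^n = \mathrm{id}_{\mathfrak{h}}$, write $t = \frac1n q(\tau)$ for $q(X) = 1 + X + \cdots + X^{n-1}$, and exploit the Bézout identity $1 = \frac1n q(X) - (X-1)h(X)$, valid because $q(1) = n \in k^\ast$, to produce the explicit witness $y = h(\tau)(x)$; unwinding $h$ gives $y = \frac1n \sum_{j=0}^{n-2}(n-1-j)\,\gamma^j \triangleright x$, which agrees up to a $G$-invariant summand with the element the paper's recursion manufactures. Both arguments are purely linear-algebraic (neither uses the Lie bracket), both work for infinite-dimensional $\mathfrak{h}$, and both use only the invertibility of $n$. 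What your version buys is brevity, the elimination of the case analysis, and the conceptual identification of the statement as additive Hilbert 90 for $k[C_n]$-modules when $|G|$ is invertible, with $\frac1n q(\tau)$ the idempotent splitting off the invariants; the paper's version avoids polynomial algebra entirely, at the cost of a slight abuse in treating the affine (non-linear) maps $d_i$ as elements of $\Hom_k(\mathfrak{h}, \mathfrak{h})$. One caveat on your closing remark: the Bézout step genuinely exploits cyclicity (a single generator), so unlike the easy inclusion it does not extend as stated to non-cyclic $G$, where the analogue of the hard inclusion would require a different argument.
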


\begin{proof}
It is straightforward to see that $t (y - g \triangleright y) =
0$, for all $y \in \mathfrak{h}$. Conversely, let $a \in {\rm Ker}
(t)$. We define recursively the sequence of elements $(d_i)_{i\geq
0}$ of ${\rm Hom}_k (\mathfrak{h}, \, \mathfrak{h})$ by the
formulas:
$$
d_0 (y) := a + y, \qquad d_{i+1} (y) := a + \gamma \triangleright
d_i(y),
$$
for all $i\geq 0$ and $y \in \mathfrak{h}$. Thus, we have $d_1 (y)
= a + \gamma \triangleright a + \gamma \triangleright y$,
$\cdots$, $d_{n-2} (y) = a + \gamma \triangleright a + \cdots +
\gamma^{n-2} \triangleright a + \gamma^{n-2} \triangleright y$ and
using $t(a) = 0$ we obtain $d_{n-1} (y) = \gamma^{n-1}
\triangleright y$ and hence $d_n (y) = a + \gamma \triangleright (
\gamma^{n-1} \triangleright y ) = a + y = d_0 (y)$. Theferore $d_n
= d_0$ i.e., the sequence $(d_i)_{i\geq 0}$ is periodic. Now, in
the abelian group ${\rm Hom}_k (\mathfrak{h}, \, \mathfrak{h})$ we
add all the equalities listed below:
$$
d_1 = a + \gamma \triangleright d_0, \quad  d_2 = a +
\gamma\triangleright d_1, \quad  \cdots,  d_{n-1} = a + \gamma
\triangleright d_{n-2}, \quad d_n = a + \gamma \triangleright
d_{n-1}
$$
and using $d_n = d_0$, we obtain $\sum_{i=0}^{n-1} \, d_i = n \, a
+ \gamma \triangleright \bigl( \sum_{i=0}^{n-1} d_i \bigl)$. If
$d_0 + d_1 + \cdots d_{n-1} = 0$ in the abelian group ${\rm Hom}_k
(\mathfrak{h}, \, \mathfrak{h})$, we obtain using the
invertibility of $n$ in $k$, that $a = 0 = 0 - \gamma
\triangleright 0$ and we are done. On the other hand, if $d_0 +
d_1 + \cdots d_{n-1} \neq 0$ we can pick some $z \in \mathfrak{h}$
such that $y := \sum_{i=0}^{n-1} \, d_i (z) \neq 0$. Then, using
$d_n(z) = d_0 (z)$, we have:
\begin{eqnarray*}
n \, a + \gamma \triangleright y = n \, a + \sum_{i=0}^{n-1}
\gamma \triangleright d_i (z) = \sum_{i=0}^{n-1} \bigl(a + \gamma
\triangleright d_i (z) \bigl) = \sum_{i=0}^{n-1} d_{i+1} (z) = y
\end{eqnarray*}
This shows that $a = n^{-1} \, \bigl(y - \gamma \triangleright y
\bigl)$ and the proof is finished.
\end{proof}

Now we introduce the following:

\begin{definition}\label{g-abelian}
Let $\mathfrak{h}$ be a Lie algebra, $G$ a group, $\varphi: G \to
{\rm Aut}_{\rm Lie} (\mathfrak{h})$ a morphism of groups, $\gamma
\in G$ and $\mathfrak{h}_{\gamma} := \{ y - \gamma \triangleright
y \, | \, y\in \mathfrak{h}\}$. The action $\varphi$ is called
$\gamma$-abelian if:
\begin{equation}\label{gamab}
[g \triangleright z , \, g'\triangleright z'] = 0
\end{equation}
for all $g \neq g' \in G$ and $z$, $z'\in \mathfrak{h}_{\gamma}$.
\end{definition}

The structure theorem for cyclic Galois extensions of fields
\cite[Theorem 6.2]{Lang} can be rephrased as follows: if $G \leq
{\rm Aut} (K)$ is a cyclic subgroup of order $n$ of the group of
authomorphisms of a field $K$ of characteristic zero and $k :=
K^G$, then $K$ is isomorphic to the splitting field over $k$ of a
polynomial of the form $X^n - a \in k[X]$. The Lie algebra
counterpart of this result now follows by replacing the
splitting field with the semidirect product of Lie algebras:

\begin{corollary}\label{ciclicstru}
Let $\mathfrak{h}$ be a Lie algebra, $G$ a finite cyclic group
generated by an element $\gamma$ whose order $n$ is invertible in
$k$ and $\varphi: G \to {\rm Aut}_{\rm Lie} (\mathfrak{h})$ a
$\gamma$-abelian morphism of groups. Then, the map defined for any
$g\in \mathfrak{h}^G$ and $x\in \mathfrak{h}_{\gamma}$ by:
\begin{equation}\label{recizoaabb}
\varphi: \mathfrak{h}^G \,\rtimes \, \mathfrak{h}_{\gamma} \to
\mathfrak{h}, \qquad \varphi(g, x) := g + x
\end{equation}
is an isomorphism of Lie algebras, where $\mathfrak{h}^G \,
\rtimes \, \mathfrak{h}_{\gamma}$ is the semidirect product of Lie
algebras associated to the right action  $\leftharpoonup :
\mathfrak{h}_{\gamma} \times \mathfrak{h}^G \to
\mathfrak{h}_{\gamma}$, given by $x \leftharpoonup g :=[x, \, g]$.
\end{corollary}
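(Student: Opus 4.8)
The plan is to deduce this corollary directly from the two results immediately preceding it: Hilbert's Theorem 90 for Lie algebras (\thref{Hilbert90}) and Artin's Theorem for Lie algebras (\thref{recsiGal}). First I would invoke \thref{Hilbert90}: since $G$ is cyclic, generated by $\gamma$ of invertible order $n$, the kernel of the trace map is exactly $V := {\rm Ker}(t) = \{y - \gamma \triangleright y \mid y \in \mathfrak{h}\} = \mathfrak{h}_{\gamma}$. Then \thref{recsiGal}(1) already supplies an isomorphism of Lie algebras $\varphi: \mathfrak{h}^G \, \#^{\bullet} \, \mathfrak{h}_{\gamma} \to \mathfrak{h}$, $\varphi(g,x) = g+x$, where the skew crossed product carries the bracket \equref{bracsc} assembled from the action $\leftharpoonup$, the cocycle $\theta$ and the quasi-bracket $\{-,-\}$ of \equref{formagract}--\equref{formagractb}. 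It therefore remains only to show that, under the $\gamma$-abelian hypothesis, this skew crossed product collapses onto the ordinary semidirect product $\mathfrak{h}^G \rtimes \mathfrak{h}_{\gamma}$.

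The crux is the vanishing of the cocycle, namely $\theta(x,y) = 0$ for all $x, y \in \mathfrak{h}_{\gamma}$, and I expect this to be the one genuine obstacle; the rest is bookkeeping. The idea is to exploit that every element of $\mathfrak{h}_{\gamma} = {\rm Ker}(t)$ has vanishing trace, so that $\sum_{i=0}^{n-1} \gamma^{i} \triangleright x = n\, t(x) = 0$. Expanding the bracket of two such null sums gives
\begin{equation*}
0 = \Bigl[\, \sum_{i=0}^{n-1} \gamma^{i} \triangleright x, \,\, \sum_{j=0}^{n-1} \gamma^{j} \triangleright y \,\Bigr] = \sum_{i,\, j = 0}^{n-1} [\gamma^{i} \triangleright x, \, \gamma^{j} \triangleright y].
\end{equation*}
Applying the $\gamma$-abelian condition \equref{gamab} with $g = \gamma^{i} \neq \gamma^{j} = g'$ annihilates every off-diagonal term $i \neq j$, collapsing the double sum to its diagonal $\sum_{i=0}^{n-1} [\gamma^{i} \triangleright x, \gamma^{i} \triangleright y] = n\, \theta(x,y)$ by the formula \equref{formagract}. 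Since $n$ is invertible in $k$, this forces $\theta(x,y) = 0$.

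With $\theta = 0$ the conclusion follows quickly. From \equref{formagractb} the quasi-bracket reduces to $\{x, y\} = [x, y]$, and since $\theta(x,y) = t([x,y]) = 0$ we also get $[x,y] \in {\rm Ker}(t) = \mathfrak{h}_{\gamma}$, so that $\mathfrak{h}_{\gamma}$ is closed under the bracket of $\mathfrak{h}$ and $\{-,-\}$ is a genuine Lie bracket on it. Substituting $\theta = 0$ and $\{x,y\} = [x,y]$ into \equref{brackskcp} (equivalently \equref{bracsc}) turns the skew crossed product bracket into $[(g,x),(g',y)] = \bigl([g,g'],\, [x,y] + [x,g'] - [y,g]\bigr)$, which is precisely the semidirect product bracket of $\mathfrak{h}^G \rtimes \mathfrak{h}_{\gamma}$ for the right action $x \leftharpoonup g = [x,g]$ (this action indeed lands in $\mathfrak{h}_{\gamma}$, as was already verified in the derivation of \equref{formagract}, where $t([x,g]) = [t(x),g] = 0$). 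Hence $\mathfrak{h}^G \, \#^{\bullet} \, \mathfrak{h}_{\gamma} = \mathfrak{h}^G \rtimes \mathfrak{h}_{\gamma}$, and the map $\varphi(g,x) = g+x$ furnished by \thref{recsiGal}(1) is the desired isomorphism of Lie algebras.
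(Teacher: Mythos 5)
Your proposal is correct and takes essentially the same approach as the paper: both reduce the statement, via \thref{Hilbert90} and \thref{recsiGal}, to showing that the cocycle $\theta$ of \equref{formagract} vanishes, and both kill it by combining the $\gamma$-abelian hypothesis with the vanishing of the orbit sum $\sum_{i=0}^{n-1}\gamma^{i}\triangleright x$ for $x\in\mathfrak{h}_{\gamma}$. The only cosmetic difference is that you expand $0=\bigl[\sum_{i}\gamma^{i}\triangleright x,\,\sum_{j}\gamma^{j}\triangleright y\bigr]$ and discard the off-diagonal terms, while the paper writes elements as $y-\gamma\triangleright y$ and collapses the diagonal sum into a bracket of telescoping sums --- the same computation read in opposite directions.
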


\begin{proof}
Using Theorem~\ref{Hilbert90} together with Theorem~\ref{recsiGal}
we only need to prove that the cocycle $\theta :
\mathfrak{h}_{\gamma} \times \mathfrak{h}_{\gamma} \to
\mathfrak{h}^G$ given by (\ref{formagract}) is the trivial map.
Moreover, in this case it also follows that the bracket $\{-, \,
-\}$ on $\mathfrak{h}_{\gamma}$ depicted in (\ref{formagractb})
coincides with the Lie bracket on $\mathfrak{h}$, i.e. $\{x, \,
y\} = [x, \, y]$, for all $x$, $y \in \mathfrak{h}_{\gamma}$ and
$\mathfrak{h}_{\gamma}$ is an ideal of $\mathfrak{h}$. Indeed, let
$y - \gamma \triangleright y$ and $y' - \gamma \triangleright y'$
be two elements of $\mathfrak{h}_{\gamma}$, for some $y$, $y\in
\mathfrak{h}$. Then we have:
\begin{eqnarray*}
\theta (y - \gamma \triangleright y, \, y' - \gamma \triangleright
y') &=& n^{-1} \, \sum_{\delta \in G} \, [\delta \triangleright (y
- \gamma \triangleright y), \, \delta \triangleright (y' - \gamma
\triangleright y')] \\
&=& n^{-1} \sum_{i=0}^{n-1} \, [\gamma^i \triangleright (y -
\gamma \triangleright y), \, \gamma^i \triangleright (y' - \gamma
\triangleright y')] \\
&=& n^{-1} \, [ \sum_{i=0}^{n-1} \, \gamma^i \triangleright (y -
\gamma \triangleright y) , \, \sum_{i=0}^{n-1} \, \gamma^i
\triangleright (y - \gamma \triangleright y) ] = 0
\end{eqnarray*}
where in the third equality we used the fact that $\varphi$ is a
$\gamma$-abelian action while the final equality holds due to the
following trivial identity: $\sum_{i=0}^{n-1} \, \gamma^i
\triangleright (y - \gamma \triangleright y) = 0$.
\end{proof}

\begin{remark}\label{galcicaa}
Under the assumptions of Corollary~\ref{ciclicstru} we can provide
an simpler description of the Galois group ${\rm Gal} \,
(\mathfrak{h}/\mathfrak{h}^G) \cong {\mathbb
G}_{\mathfrak{h}^G}^{\mathfrak{h}_{\gamma}} \, \bigl( \varphi
\bigl)$. Indeed, in this case the group ${\mathbb
G}_{\mathfrak{h}^G}^{\mathfrak{h}_{\gamma}} \, \bigl( \varphi
\bigl)$ as defined in Theorem~\ref{recsiGal} consists of the set
of all pairs $(\sigma, \, r) \in {\rm GL}_k (\mathfrak{h}_{\gamma}
) \times {\rm Hom}_k (\mathfrak{h}_{\gamma}, \, \mathfrak{h}^G)$
satisfying the following compatibility conditions for any $g \in
\mathfrak{h}^G$ and $x$, $y\in \mathfrak{h}_{\gamma}$:
\begin{eqnarray*}
&& \sigma ([x, \, g]) = [\sigma(x), \, g], \qquad r ([x, \, g]) =
[r(x), \, g], \qquad  r ([x, \, y]) = [r(x), \, r(y) ] \\
&& \sigma ([x, \, y]) - [\sigma(x), \, \sigma(y) ] = [\sigma(x),
\, r(y) ] + [r(x), \, \sigma(y) ]
\end{eqnarray*}
which is a subgroup in the semidirect product of groups ${\rm GL}_k
(\mathfrak{h}_{\gamma} ) \rtimes {\rm Hom}_k
(\mathfrak{h}_{\gamma}, \, \mathfrak{h}^G)$.
\end{remark}

\section{Applications and Examples}\label{exempleconc}

In this section we present some applications as well as explicit
examples of Galois groups. The simplest case is that of extensions
$\mathfrak{g} \subseteq \mathfrak{h}$ for which the codimension of
$\mathfrak{g}$ in $\mathfrak{h}$ is equal to $1$. In this case we
will show that the Galois group ${\rm Gal} \,
(\mathfrak{h}/\mathfrak{g})$ is metabelian. To this end, consider
$\mathfrak{g} \subseteq \mathfrak{h}$ to be an extension of Lie
algebras such that $\mathfrak{g}$ has codimension $1$ in
$\mathfrak{h}$. Thus, we can write $\mathfrak{h} = \mathfrak{g} +
V$, where $V := k x$, for a fixed element $x \in \mathfrak{h}
\setminus \mathfrak{g}$. We choose the map $p$ defined by $p(x) :=
0$ and $p(g) = g$, for all $g \in \mathfrak{g}$, as a retraction
of the inclusion map $\mathfrak{g} \hookrightarrow \mathfrak{h}$ .
Now, the space of all Lie extending systems of $\mathfrak{g}$
through $V = kx$ is parameterized by the set ${\rm TwDer}
(\mathfrak{g})$ of all twisted derivations of $\mathfrak{g}$
\cite[Proposition 4.4]{am-2013}. Recall that a twisted derivation
of a Lie algebra $\mathfrak{g}$ is a pair $(\lambda, \Delta)$
consisting of two linear maps $\lambda : \mathfrak{g} \to k$ and
$\Delta : \mathfrak{g} \to \mathfrak{g}$ such that for any $g$,
$h\in \mathfrak{g}$:
\begin{equation}\label{lambderivari}
\lambda ([g, \, h]) = 0, \quad \,\,  \Delta ([g, \, h]) =  [
\Delta (g), \, h] + [g, \, \Delta (h)] + \lambda(g) \Delta (h) -
\lambda(h) \Delta (g)
\end{equation}
The bijection between the set of all Lie extending structures of
$\mathfrak{g}$ through $V = kx$ and ${\rm TwDer} (\mathfrak{g})$
is given by the two-sided formula:
\begin{equation}\label{extendbij}
x \leftharpoonup g :=: \lambda (g) x, \quad x \rightharpoonup g
:=: \Delta(g), \quad \theta :=: 0, \quad \{-, \, -\} :=: 0
\end{equation}
for all $g \in \mathfrak{g}$. Let  $(\lambda, \Delta) \in {\rm
TwDer} (\mathfrak{g})$ be the twisted derivation associated to the
canonical Lie extending system of $\mathfrak{g}$ through $V$
arising from $p$ via (\ref{extendbij}) and denote by
$\mathfrak{g}_{(\lambda, \Delta)} := \mathfrak{g} \,\natural \,
kx$ the corresponding unified product. For future use, we
mention that the Lie algebra $\mathfrak{g}_{(\lambda, \, \Delta)}$
can be defined (see \cite{am-sigma}) as the vector space $\mathfrak{g}
\times k$ with the bracket given as follows for all $x$, $y \in \mathfrak{g}$
and $a$, $b \in k$:
\begin{eqnarray}
\{(x, a) , \, (y, b) \} := \Bigl([x, \, y] + b \, \Delta(x) - a\,
\Delta(y), \ b \, \lambda(x) - a\, \lambda(y) \Bigl)
\label{exdim300aa}
\end{eqnarray}
Of course $\mathfrak{g}_{(\lambda, \, \Delta)}$ contains
$\mathfrak{g} \cong \mathfrak{g} \times \{0\}$ as a subalgebra of
codimension $1$. We observe that $(\lambda := 0, \, \Delta)
\in {\rm TwDer} (\mathfrak{g})$ if and only if $\Delta \in {\rm
Der} (\mathfrak{g})$ is a classical derivation of $\mathfrak{g}$;
in this case we shall denote $\mathfrak{g}_{(\Delta)} :=
\mathfrak{g}_{(0, \, \Delta)}$, for any $\Delta \in {\rm Der}
(\mathfrak{g})$. As an example, we mention that if $\mathfrak{g}$
is a perfect Lie algebra then the first compatibility condition in
(\ref{lambderivari}) yields ${\rm TwDer} (\mathfrak{g}) = \{0\}
\times {\rm Der} (\mathfrak{g})$.

Continuing our investigation it follows that the Galois group
${\rm Gal} \, (\mathfrak{h}/\mathfrak{g}) \cong {\rm Gal} \,
(\mathfrak{g}_{(\lambda, \Delta)}/\mathfrak{g})$, which is a
special case of the isomorphism given by (\ref{izogalgen}). We
denote by ${\mathbb G}_{\mathfrak{g}} \, (\lambda, \, \Delta)$ the
set of all pairs $(u, \, g_0) \in k^* \times \mathfrak{g}$
satisfying the following compatibility condition for all $g \in
\mathfrak{g}$:
\begin{equation}\label{grciud}
\lambda (g) \, g_0 = [g_0, \, g] + (u-1) \, \Delta (g)
\end{equation}
Then ${\mathbb G}_{\mathfrak{g}} \, (\lambda, \, \Delta)$ is a
subgroup in the metabelian group ${\mathbb G}_{\mathfrak{g}} :=
k^* \rtimes \mathfrak{g}$ whose multiplication is given by
equation~(\ref{grupstrcb}), that is $(u, \, g) \cdot (u', \, g') :=
(uu', \, u'g + g')$, for all $u$, $u'\in k^*$ and $g$, $g' \in
\mathfrak{g}$. We can now prove the following:

\begin{corollary}\label{galcodim1}
Let $\mathfrak{g} \subseteq \mathfrak{h}$ be a Lie subalgebra of
codimension $1$ in $\mathfrak{h}$ and $(\lambda, \Delta) \in {\rm
TwDer} (\mathfrak{g})$ the twisted derivation defined by
equation~(\ref{extendbij}) for a fixed $x \in \mathfrak{h} \setminus
\mathfrak{g}$. Then there exists an isomorphism of groups given
for any $(u, \, g_0) \in {\mathbb G}_{\mathfrak{g}} \, (\lambda,
\, \Delta)$, $g\in \mathfrak{g}$ and $\alpha \in k$ by:
\begin{equation}\label{izogalcodim1}
\Omega:  {\mathbb G}_{\mathfrak{g}} \, (\lambda, \, \Delta) \, \to
{\rm Gal} \, (\mathfrak{h}/\mathfrak{g}), \quad \Omega (u, \, g_0)
(g + \alpha \, x) := g + \alpha \, g_0 + u \alpha \, x
\end{equation}
In particular, the Galois group ${\rm Gal} \,
(\mathfrak{h}/\mathfrak{g})$ is metabelian and hence solvable.
\end{corollary}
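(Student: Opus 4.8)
The plan is to specialize \thref{grupulgal} to the codimension-$1$ setting and then collapse the four compatibility conditions (G1)--(G4) to the single relation \equref{grciud} by exploiting $\dim_k V = 1$. First I would apply \thref{grupulgal} with the retraction $p$ determined by $p(x)=0$ and $p|_{\mathfrak{g}}={\rm id}$, whose associated canonical extending system is, by \equref{extendbij}, the one with $x\leftharpoonup g=\lambda(g)\,x$, $x\rightharpoonup g=\Delta(g)$, $\theta=0$ and $\{-,-\}=0$. The theorem already supplies a group isomorphism $\Omega$ from ${\mathbb G}_{\mathfrak{g}}^V(\leftharpoonup,\rightharpoonup,\theta,\{-,-\})$ onto ${\rm Gal}(\mathfrak{h}/\mathfrak{g})$, so the whole task reduces to identifying the source group with ${\mathbb G}_{\mathfrak{g}}(\lambda,\Delta)$ and matching the displayed formula.

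Since $V=kx$, I would use the identification ${\rm GL}_k(V)\times{\rm Hom}_k(V,\mathfrak{g})\cong k^*\times\mathfrak{g}$ sending $(\sigma,r)$ to $(u,g_0)$, where $\sigma(x)=u\,x$ and $r(x)=g_0$. Unwinding the axioms on the one-dimensional space is then immediate: (G1) holds automatically because $\sigma$ is scalar multiplication and hence commutes with $\leftharpoonup$; and since every condition is (bi)linear in its $V$-arguments, it suffices to test (G3) and (G4) on the diagonal pair $(x,x)$, where $\{x,x\}=0$, $\theta(x,x)=0$, and the remaining module- and bracket-terms cancel, so both conditions read $0=0$. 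Evaluating (G2) on $x$ gives precisely $\lambda(g)\,g_0=[g_0,g]+(u-1)\Delta(g)$ for all $g\in\mathfrak{g}$, i.e. \equref{grciud}. Thus the pairs satisfying (G1)--(G4) are exactly the $(u,g_0)\in{\mathbb G}_{\mathfrak{g}}(\lambda,\Delta)$.

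I expect the only step requiring genuine (if routine) checking to be the compatibility of the group laws. Transporting the multiplication \equref{grupstrc} through the identification, I would compute $(\sigma\circ\sigma')(x)=(uu')\,x$ and $(r\circ\sigma'+r')(x)=u'g_0+g_0'$, so the induced product is $(u,g_0)\cdot(u',g_0')=(uu',\,u'g_0+g_0')$, which is exactly the law \equref{grupstrcb} of ${\mathbb G}_{\mathfrak{g}}=k^*\rtimes\mathfrak{g}$. Hence $(\sigma,r)\mapsto(u,g_0)$ is a group isomorphism onto the subgroup ${\mathbb G}_{\mathfrak{g}}(\lambda,\Delta)$, and composing with $\Omega$ and evaluating on $g+\alpha x$ reproduces the stated map \equref{izogalcodim1}.

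Finally, for the solvability claim, ${\mathbb G}_{\mathfrak{g}}$ is metabelian since it is an extension of the abelian group $k^*$ by the abelian group $(\mathfrak{g},+)$, as recorded in the preliminaries. Metabelianness passes to subgroups, because $[H,H]\subseteq[{\mathbb G}_{\mathfrak{g}},{\mathbb G}_{\mathfrak{g}}]$ is abelian for any subgroup $H$; therefore ${\rm Gal}(\mathfrak{h}/\mathfrak{g})\cong{\mathbb G}_{\mathfrak{g}}(\lambda,\Delta)$ is metabelian, and in particular $2$-step solvable.
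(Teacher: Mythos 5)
Your proof is correct and takes essentially the same route as the paper's: specialize \thref{grupulgal} to $V = kx$ via $\sigma(x) = ux$, $r(x) = g_0$, note that (G1), (G3), (G4) are automatically satisfied while (G2) reduces exactly to \equref{grciud}, and obtain metabelianness from the embedding into $k^* \rtimes \mathfrak{g}$. Your additional details (the diagonal bilinearity argument for (G3)--(G4) and the explicit transport of the group law \equref{grupstrc} to \equref{grupstrcb}) merely make explicit what the paper leaves implicit.
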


\begin{proof}
We apply Theorem~\ref{grupulgal}: since $V = kx$, any linear
automorphism $\sigma : V \to V$ is uniquely determined by an
invertible element $u\in k^*$ via $\sigma (x) := u x$ while a
linear map $r: V \to \mathfrak{g}$ is implemented by an element
$g_0 \in \mathfrak{g}$ via $r(x) := g_0$. Now, the axioms (G1),
(G3) and (G4) which define the group ${\mathbb G}_{\mathfrak{g}}^V \,
\bigl( \leftharpoonup, \, \rightharpoonup, \, \theta, \{-, \, -\}
\bigl)$ from Theorem~\ref{grupulgal} are trivially fulfilled,
while axiom (G2) comes down to the compatibility condition
(\ref{grciud}). Finally, the group ${\rm Gal} \,
(\mathfrak{h}/\mathfrak{g})$ is metabelian due to its embedding in
the metabelian group $k^* \rtimes \mathfrak{g}$.
\end{proof}

\begin{example}\label{codim1ana}
Let $n \in {\mathbb N}^{*}$ be a positive integer and consider the
extension of Lie algebras $\mathfrak{h}^{2n+1} \subseteq
\mathfrak{t}^{2n+2}$, where $\mathfrak{h}^{2n+1}$ is the
$(2n+1)$-dimensional Heisenberg Lie algebra from
Example~\ref{heis} and $\mathfrak{t}^{2n+2}$ is the Lie algebra
with basis $\{x_{1}, \cdots, x_{n}, \,  y_{1}, \cdots, y_{n}, \,
w, \, u\}$ and bracket given for all $i = 1, \cdots, n$ by: $[x_i,
\, y_i] = w$, $[u, \, x_{i}] = w + u$, $[u, \, y_{i}] = w + u$.
Then there exists an isomorphism of groups:
$$
{\rm Gal} \, ( \mathfrak{t}^{2n+2}/\mathfrak{h}^{2n+1} ) \cong
(k^{*}, \cdot).
$$
First observe that the Lie algebra $\mathfrak{t}^{2n+2}$ is
isomorphic to $\mathfrak{h}^{2n+1}_{(\lambda, \Delta)}$, where the
twisted derivation $(\lambda, \Delta)$ of the Heisenberg Lie
algebra $\mathfrak{h}^{2n+1}$ is given by: $\lambda(w) := 0$,
$\lambda(x_{i}) = \lambda(y_{i}) := 1$, $\Delta(w) := 0$,
$\Delta(x_{i}) = \Delta(y_{i}) := w$, for all $i = 1, 2, \cdots,
n$. Now a straightforward computation shows that ${\mathbb
G}_{\mathfrak{h}^{2n+1}} \, (\lambda, \Delta) = \{(\alpha,\,
(\alpha-1) w) ~|~ \alpha \in k^* \}$ and the map $\varphi:
\bigl({\mathbb G}_{\mathfrak{h}^{2n+1}} \, (\lambda, \Delta),
\cdot \bigl)  \to k^{*}$ given by $\varphi(\alpha,\, (\alpha-1) w)
= \alpha$ is a group isomorphism where $\cdot$ is the
multiplication defined by (\ref{grupstrcb}). The conclusion follows by
Corollary~\ref{galcodim1}.
\end{example}

We recall that an extension $\mathfrak{g} \subseteq \mathfrak{h}$
of Lie algebras is called a flag extension \cite[Definition
4.1]{am-2013} if there exists a finite chain of Lie subalgebras of
$\mathfrak{h}$
\begin{equation}\label{lant}
\mathfrak{g} = \mathfrak{h}_0 \subset \mathfrak{h}_1 \subset
\cdots \subset \mathfrak{h}_m = \mathfrak{h}
\end{equation}
such that $\mathfrak{h}_i$ has codimension $1$ in
$\mathfrak{h}_{i+1}$, for all $i = 0, \cdots, m-1$. Supersolvable
Lie algebras provide examples of flag extensions. Based on this
concept, we propose the following definition as the counterpart
for Lie algebras of normal radical extensions of fields:

\begin{definition}\label{radical}
An extension $\mathfrak{g} \subseteq \mathfrak{h}$ of Lie algebras
is called a radical extension if there exists a chain of
subalgebras as in equation~(\ref{lant}) such that each
$\mathfrak{h}_{i-1}$ is invariant with respect to any element
$\tau \in {\rm Gal} (\mathfrak{h}_{i}/\mathfrak{g})$, i.e. $\tau
(\mathfrak{h}_{i-1}) \subseteq \mathfrak{h}_{i-1}$, for all $\tau
\in {\rm Gal} (\mathfrak{h}_i/\mathfrak{g})$ and $i = 1, \cdots,
m$.
\end{definition}

If $\mathfrak{g}$ has codimension $1$ in $\mathfrak{h}$, then
$\mathfrak{h}/\mathfrak{g}$ is a radical extension. Based on
Theorem~\ref{grupulgal} and Corollary~\ref{galcodim1}, exactly as
in the classical case of radical extensions of fields, we can
prove the following:
\begin{theorem}\label{galcodim1aa}
Let $\mathfrak{g} \subseteq \mathfrak{h}$ be a radical extension
of finite dimensional Lie algebras. Then the Galois group ${\rm
Gal} \, (\mathfrak{h}/\mathfrak{g})$ is solvable.
\end{theorem}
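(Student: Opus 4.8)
The plan is to induct on the length $m$ of the flag $\mathfrak{g} = \mathfrak{h}_0 \subset \mathfrak{h}_1 \subset \cdots \subset \mathfrak{h}_m = \mathfrak{h}$ witnessing that the extension is radical, reducing the solvability of ${\rm Gal} \, (\mathfrak{h}/\mathfrak{g})$ to the codimension $1$ case already settled in \coref{galcodim1}. The base case $m \leq 1$ is immediate: for $m = 0$ the Galois group is trivial, while for $m = 1$ the subalgebra $\mathfrak{g}$ has codimension $1$ in $\mathfrak{h}$, so \coref{galcodim1} gives at once that ${\rm Gal} \, (\mathfrak{h}/\mathfrak{g})$ is metabelian, hence solvable.

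For the inductive step the central device is a restriction homomorphism, whose very existence is guaranteed by the invariance clause of \deref{radical}. Applying that clause with $i = m$, every $\tau \in {\rm Gal} \, (\mathfrak{h}/\mathfrak{g}) = {\rm Gal} \, (\mathfrak{h}_m/\mathfrak{g})$ satisfies $\tau(\mathfrak{h}_{m-1}) \subseteq \mathfrak{h}_{m-1}$; since $\mathfrak{h}_{m-1}$ is finite dimensional and $\tau$ is injective, in fact $\tau(\mathfrak{h}_{m-1}) = \mathfrak{h}_{m-1}$, so $\tau$ restricts to a Lie algebra automorphism of $\mathfrak{h}_{m-1}$. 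As $\tau$ fixes $\mathfrak{g} \subseteq \mathfrak{h}_{m-1}$, I would thus obtain a well-defined group homomorphism
\begin{equation*}
\rho: {\rm Gal} \, (\mathfrak{h}/\mathfrak{g}) \to {\rm Gal} \, (\mathfrak{h}_{m-1}/\mathfrak{g}), \qquad \rho(\tau) := \tau|_{\mathfrak{h}_{m-1}}.
\end{equation*}

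Next I would examine the two ends of $\rho$. Its kernel consists exactly of the automorphisms of $\mathfrak{h}$ that fix $\mathfrak{h}_{m-1}$ pointwise, i.e. $\ker(\rho) = {\rm Gal} \, (\mathfrak{h}/\mathfrak{h}_{m-1})$; since $\mathfrak{h}_{m-1}$ has codimension $1$ in $\mathfrak{h}$, \coref{galcodim1} shows this kernel is metabelian, hence solvable. For the image, observe that the truncated flag $\mathfrak{g} = \mathfrak{h}_0 \subset \cdots \subset \mathfrak{h}_{m-1}$ again displays $\mathfrak{g} \subseteq \mathfrak{h}_{m-1}$ as a radical extension, because the codimension $1$ and invariance conditions for the indices $i = 1, \ldots, m-1$ are inherited verbatim. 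By the induction hypothesis ${\rm Gal} \, (\mathfrak{h}_{m-1}/\mathfrak{g})$ is solvable, and therefore so is the image ${\rm im}(\rho)$, being a subgroup of it.

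Finally I would invoke the standard fact that a group sitting in an exact sequence $1 \to \ker(\rho) \to {\rm Gal} \, (\mathfrak{h}/\mathfrak{g}) \to {\rm im}(\rho) \to 1$ with both ends solvable is itself solvable; this closes the induction. The one point demanding genuine care is the well-definedness of $\rho$, and this is precisely where the invariance clause of \deref{radical} is indispensable: without it a Galois automorphism of $\mathfrak{h}$ need not preserve the intermediate term $\mathfrak{h}_{m-1}$, and the restriction would fail to exist. Everything else is a routine combination of \coref{galcodim1} with the closure of solvable groups under passage to subgroups and to extensions.
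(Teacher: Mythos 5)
Your proposal is correct and takes essentially the same route as the paper's proof: induction on the length $m$ of the chain, with the restriction homomorphism to ${\rm Gal}\,(\mathfrak{h}_{m-1}/\mathfrak{g})$ (well defined by the invariance clause and finite dimensionality), whose kernel ${\rm Gal}\,(\mathfrak{h}/\mathfrak{h}_{m-1})$ is metabelian by \coref{galcodim1} and whose image is solvable by the induction hypothesis, concluding via closure of solvable groups under extensions. You even spell out two points the paper leaves implicit, namely that injectivity plus finite dimension forces $\tau(\mathfrak{h}_{m-1}) = \mathfrak{h}_{m-1}$, and that the truncated chain again witnesses $\mathfrak{g} \subseteq \mathfrak{h}_{m-1}$ as a radical extension.
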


\begin{proof} Consider a finite chain of subalgebras as in (\ref{lant}). We will proceed by induction on $m$. If $m = 1$ the conclusion follows
by Corollary~\ref{galcodim1}. Now let $m > 1$ and assume the
statement to be true for $m-1$, that is the group ${\rm Gal} \,
(\mathfrak{h}_{m-1}/\mathfrak{g})$ is solvable. Then, the map:
$$
\Gamma : {\rm Gal} \, (\mathfrak{h}/\mathfrak{g}) \to {\rm Gal} \,
(\mathfrak{h}_{m-1}/\mathfrak{g}), \qquad \Gamma (\tau) :=
\tau_{|_{\mathfrak{h}_{m-1}} }
$$
where $\tau_{|_{\mathfrak{h}_{m-1}} }$ is the restriction of
$\tau$ to $\mathfrak{h}_{m-1}$ is well defined since the extension
is radical, the Lie algebras are finite dimensional and $\Gamma$
is a morphism of groups. Now ${\rm Ker} (\Gamma) = {\rm Gal} (
\mathfrak{h}/\mathfrak{h}_{m-1})$ which is a metabelian (in
particular solvable) group again by Corollary~\ref{galcodim1}.
Thus, we obtain an isomorphism of groups ${\rm Gal} \,
(\mathfrak{h}/\mathfrak{g})/{\rm Gal} \, (
\mathfrak{h}/\mathfrak{h}_{m-1}) \cong {\rm Im} (\Gamma)$, and
${\rm Im} (\Gamma)$ is a solvable group as a subgroup in such a
group. To conclude, we have obtained that ${\rm Gal} \,
(\mathfrak{h}/\mathfrak{g})$ is an extension of a solvable group
${\rm Im} (\Gamma)$ by the solvable group ${\rm Gal} \, (
\mathfrak{h}/\mathfrak{h}_{m-1})$, hence ${\rm Gal} \,
(\mathfrak{h}/\mathfrak{g})$ is solvable too.
\end{proof}

The compatibility condition (\ref{grciud}) which describes the
elements of the group ${\rm Gal} \, (\mathfrak{g}_{(\lambda,
\Delta)}/\mathfrak{g})$ is crucial and deserves a thorough
analysis. First, observe that $(1, \, 0) \in {\mathbb
G}_{\mathfrak{g}} \, (\lambda, \, \Delta)$. On the other hand, if
$(u, \, g_0) \in {\mathbb G}_{\mathfrak{g}} \, (\lambda, \,
\Delta)$, for some $u \neq 1$, then (\ref{grciud}) implies that
$\Delta$ is given by the formula $\Delta (g) = (u-1)^{-1} \,
\bigl( \lambda(g) \, g_0 - [g_0, \, g] \bigl)$, for all $g \in
\mathfrak{g}$. A straightforward computation shows that the second
compatibility condition of (\ref{lambderivari}) is trivially fulfilled, being
equivalent to the Jacobi identity. The center of a Lie
algebra $\mathfrak{g}$ will be denoted by $Z (\mathfrak{g}) := \{
g\in \mathfrak{g} \, | \, [g, \, - ] = 0 \}$. Then $Z
(\mathfrak{g})$ is an abelian subgroup of $(\mathfrak{g}, + )$ and
it can be realized as a Galois group of the following type of Lie
algebra extensions:

\begin{corollary}\label{galperfc}
Let $\mathfrak{g}$ be a Lie algebra and $\Delta \in {\rm Der}
(\mathfrak{g})$ a derivation that is not inner. Then there exists
an isomorphism of groups ${\rm Gal} \, (\mathfrak{g}_{(\Delta)}
/\mathfrak{g}) \cong Z (\mathfrak{g}) $.
\end{corollary}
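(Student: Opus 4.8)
The plan is to apply \coref{galcodim1} and compute the group ${\mathbb G}_{\mathfrak{g}} \, (0, \, \Delta)$ explicitly, since by that corollary we have ${\rm Gal} \, (\mathfrak{g}_{(\Delta)}/\mathfrak{g}) \cong {\mathbb G}_{\mathfrak{g}} \, (0, \, \Delta)$, where $\mathfrak{g}_{(\Delta)} = \mathfrak{g}_{(0, \, \Delta)}$ and $\Delta$ is viewed as the twisted derivation $(0, \, \Delta)$. Recall that ${\mathbb G}_{\mathfrak{g}} \, (0, \, \Delta)$ consists of all pairs $(u, \, g_0) \in k^* \times \mathfrak{g}$ satisfying the compatibility \equref{grciud}, which for $\lambda = 0$ reduces to $[g_0, \, g] = (1-u) \, \Delta (g)$ for all $g \in \mathfrak{g}$.

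First I would dispose of the case $u \neq 1$. As already observed in the discussion preceding \equref{grciud}, if $(u, \, g_0) \in {\mathbb G}_{\mathfrak{g}} \, (0, \, \Delta)$ with $u \neq 1$, then $\Delta (g) = -(u-1)^{-1} \, [g_0, \, g]$ for all $g \in \mathfrak{g}$; that is, $\Delta = {\rm ad}_{-(u-1)^{-1} g_0}$ is an inner derivation of $\mathfrak{g}$. This contradicts the hypothesis that $\Delta$ is not inner, so no such pair can exist and necessarily $u = 1$.

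With $u = 1$ the compatibility \equref{grciud} becomes $[g_0, \, g] = 0$ for all $g \in \mathfrak{g}$, i.e. $g_0 \in Z (\mathfrak{g})$. Hence ${\mathbb G}_{\mathfrak{g}} \, (0, \, \Delta) = \{1\} \times Z (\mathfrak{g})$ as a set. The remaining step is to identify the group structure: restricting the multiplication \equref{grupstrcb} to pairs of this form gives $(1, \, g_0) \cdot (1, \, g_0') = (1, \, g_0 + g_0')$, so the projection $(1, \, g_0) \mapsto g_0$ is a group isomorphism onto the abelian group $(Z (\mathfrak{g}), \, +)$. Composing with the isomorphism furnished by \coref{galcodim1} then yields ${\rm Gal} \, (\mathfrak{g}_{(\Delta)}/\mathfrak{g}) \cong Z (\mathfrak{g})$.

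The argument is short and the only genuine input is the non-inner hypothesis, which is precisely what rules out the case $u \neq 1$; without it the group would in general be strictly larger. There is therefore no real obstacle to overcome — the one point to get right is the sign-and-coefficient bookkeeping when passing from \equref{grciud} with $\lambda = 0$ to the identification of $\Delta$ as $ {\rm ad}_{-(u-1)^{-1} g_0}$, so that the contradiction with the non-inner assumption is clean.
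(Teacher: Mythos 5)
Your proof is correct and takes essentially the same route as the paper's: both apply \coref{galcodim1}, specialize \equref{grciud} to $\lambda = 0$, rule out $u \neq 1$ by observing that such a pair would force $\Delta$ to be inner, and conclude that the group is $\{1\} \times Z(\mathfrak{g})$. The only difference is cosmetic --- the paper leaves implicit the final check that the restricted multiplication \equref{grupstrcb} makes $(1, g_0) \mapsto g_0$ an isomorphism onto $(Z(\mathfrak{g}), +)$, which you spell out.
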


\begin{proof}
Using (\ref{grciud}) for $\lambda : = 0$, we obtain that $(u,
\, g_0) \in {\mathbb G}_{\mathfrak{g}} \, (\Delta) := {\mathbb
G}_{\mathfrak{g}} \, (0, \, \Delta)$ if and only if $(u-1) \,
\Delta (g) = [g, \, g_0]$, for all $g \in \mathfrak{g}$. Hence,
$(1, \, g_0) \in {\mathbb G}_{\mathfrak{g}} \, (\Delta)$ if and
only if $g_0 \in Z (\mathfrak{g})$. On the other hand, since
$\Delta$ is not inner, it follows that ${\mathbb G}_{\mathfrak{g}}
\, (\Delta)$ does not contain elements of the form $(u, \, g_0)$,
with $u \neq 1$. Now, we apply Corollary~\ref{galcodim1}.
\end{proof}

\begin{example}\label{222}
Let $n \in {\mathbb N}^{*}$ be a positive integer and consider
$\mathfrak{h}^{2n+1}$ to be the $(2n+1)$-dimensional Heisenberg
Lie algebra from Example~\ref{heis}. Then $\Delta :
\mathfrak{h}^{2n+1} \to \mathfrak{h}^{2n+1}$ given by
$\Delta(x_{i}) := y_{i}$, $\Delta(y_{i}) = \Delta(w) := 0$, for
all $i = 1, 2, \cdots, n$ is a derivation of $\mathfrak{h}^{2n+1}$
that is not inner. Furthermore, we denote by $\mathfrak{b}^{2n+2}$
the Lie algebra $\mathfrak{h}^{2n+1}_{(\Delta)}$: it has the
$k$-basis $\{x_{1}, \cdots, x_{n}, \, y_{1}, \cdots, y_{n}, \, w,
\, z\}$ and bracket given for any $i = 1, \cdots, n$ by $[x_{i},
\, y_{i}] = w$, $[z, \, x_{i}] = y_{i}$. By applying
Corollary~\ref{galperfc} and taking into account that $Z
(\mathfrak{h}^{2n+1}) = k w \cong (k, +)$ we obtain that there
exists an isomorphism of groups ${\rm Gal} \, (
\mathfrak{b}^{2n+2}/\mathfrak{h}^{2n+1} ) \cong (k, +)$.
\end{example}

A Lie algebra $\mathfrak{g}$ is called sympathetic if
$\mathfrak{g}$ is perfect, has trivial center and any derivation
is inner. Semisimple Lie algebras over a field of characteristic
zero are sympathetic and there is a sympathetic non-semisimple Lie
algebra in dimension $25$ (see \cite{SZ}).

\begin{corollary}\label{galsym}
Let $\mathfrak{g}$ be a sympathetic Lie subalgebra of codimension
$1$ in a Lie algebra $\mathfrak{h}$. Then there exists an
isomorphism of groups ${\rm Gal} \, (\mathfrak{h}/\mathfrak{g})
\cong k^*$. In particular, if $k$ is a field of characteristic
zero then ${\rm Gal} \, (\mathfrak{gl}(m, k) /\mathfrak{sl} (m,
k)) \cong k^*$.
\end{corollary}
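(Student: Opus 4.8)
The plan is to reduce everything to \coref{galcodim1} and then to explicitly compute the group ${\mathbb G}_{\mathfrak{g}} \, (\lambda, \, \Delta)$ attached to the codimension-one extension $\mathfrak{g} \subseteq \mathfrak{h}$. First I would fix an element $x \in \mathfrak{h} \setminus \mathfrak{g}$ and let $(\lambda, \, \Delta) \in {\rm TwDer}(\mathfrak{g})$ be the twisted derivation associated to it via \equref{extendbij}. Since a sympathetic Lie algebra is in particular perfect, the first compatibility in \equref{lambderivari} forces $\lambda = 0$, so that $\Delta \in {\rm Der}(\mathfrak{g})$ is an honest derivation; and since in a sympathetic algebra every derivation is inner, I may write $\Delta = {\rm ad}_{h}$, i.e. $\Delta(g) = [h, \, g]$ for some $h \in \mathfrak{g}$. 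This is the complementary situation to \coref{galperfc}, where $\Delta$ was assumed \emph{not} inner.

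Next I would solve the defining relation \equref{grciud} of ${\mathbb G}_{\mathfrak{g}} \, (\lambda, \, \Delta)$. With $\lambda = 0$ it reads $(u-1)\, \Delta(g) = [g, \, g_0]$ for all $g \in \mathfrak{g}$, and substituting $\Delta(g) = [h, \, g]$ turns it into $[(u-1)h + g_0, \, g] = 0$ for all $g$. Here the trivial center of a sympathetic algebra enters decisively: it forces $(u-1)h + g_0 \in Z(\mathfrak{g}) = \{0\}$, hence $g_0 = (1-u)\, h$ is uniquely determined by $u$. Therefore ${\mathbb G}_{\mathfrak{g}} \, (0, \, \Delta) = \{ (u, \, (1-u)h) \mid u \in k^* \}$, and a short check against the multiplication \equref{grupstrcb} --- namely $(u, \, (1-u)h)\cdot(u', \, (1-u')h) = (uu', \, (1-uu')h)$ --- shows that $u \mapsto (u, \, (1-u)h)$ is a group isomorphism $k^* \cong {\mathbb G}_{\mathfrak{g}} \, (0, \, \Delta)$. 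Invoking \coref{galcodim1} then yields ${\rm Gal}(\mathfrak{h}/\mathfrak{g}) \cong k^*$.

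For the final assertion I would observe that over a field of characteristic zero $\mathfrak{sl}(m, k)$ is semisimple, hence sympathetic, and that the decomposition $\mathfrak{gl}(m, k) = \mathfrak{sl}(m, k) \oplus k\, I_m$ realizes $\mathfrak{sl}(m, k)$ as a subalgebra of codimension $1$; the claim ${\rm Gal}(\mathfrak{gl}(m, k)/\mathfrak{sl}(m, k)) \cong k^*$ is then immediate from the first part. I expect the only genuinely delicate point to be the bookkeeping ensuring that all three defining properties of a sympathetic algebra are used exactly once --- perfectness to kill $\lambda$, inner-ness to present $\Delta$ as an adjoint map, and triviality of the center to pin $g_0$ down --- after which verifying that the solution set is a subgroup isomorphic to $k^*$ is purely formal.
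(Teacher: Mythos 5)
Your proposal is correct and takes essentially the same route as the paper's own proof: both reduce to \coref{galcodim1}, use perfectness to force $\lambda = 0$ (the paper gets this by citing \cite[Proposition 2.1]{am-sigma} for $\mathfrak{h} \cong \mathfrak{g}_{(\Delta)}$, which is the same observation you extract directly from \equref{lambderivari}), write $\Delta = [\delta, \, -]$ by inner-ness, and use $Z(\mathfrak{g}) = \{0\}$ to solve \equref{grciud} as $g_0 = (1-u)\delta$, yielding ${\mathbb G}_{\mathfrak{g}} \, (\Delta) \cong k^*$. Your explicit check of the group law against \equref{grupstrcb} and the decomposition $\mathfrak{gl}(m, k) = \mathfrak{sl}(m, k) \oplus k I_m$ merely spell out steps the paper leaves implicit.
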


\begin{proof}
Indeed, since $\mathfrak{g}$ is perfect we obtain that
$\mathfrak{h} \cong \mathfrak{g}_{(\Delta)}$, for some derivation
$\Delta$ of $\mathfrak{g}$ \cite[Proposition 2.1]{am-sigma}. Let
$\delta \in \mathfrak{g}$ such that $\Delta = [\delta, \, -]$. By
applying the compatibility condition (\ref{grciud}) for $\lambda : =
0$ and $\Delta := [\delta, \, -]$ we obtain that $(u, \, g_0) \in
{\mathbb G}_{\mathfrak{g}} \, (\Delta)$ if and only if $[g, \, g_0
+ (u-1) \delta] = 0$, for all $g\in \mathfrak{g}$. Since ${\rm
Z}(\mathfrak{g}) = \{0\}$, this is equivalent to the fact that
$g_0 = (1 - u) \delta$. Hence ${\mathbb G}_{\mathfrak{g}} \,
(\Delta)$ consists of all elements of the form $(u, \, (1 - u)
\delta)$, for any $u\in k^*$ and there exists an isomorphism of
groups ${\mathbb G}_{\mathfrak{g}} \, (\Delta) \cong k^*$. Now we
apply Corollary~\ref{galcodim1}.
\end{proof}

All examples of Lie algebra extensions $\mathfrak{h}/\mathfrak{g}$
presented so far have non-trivial Galois group. We end the paper
with a Lie algebra extension whose Galois group is trivial:

\begin{example}\label{galtrivial}
Let $k$ be a field of characteristic $\neq 2$ and consider
$\mathfrak{g}$ to be the perfect $5$-dimensional Lie algebra with
the basis $\{e_{1}, e_{2}, e_{3}, e_{4}, e_{5}\}$ and bracket
given by:
\begin{eqnarray*}
&& [e_{1}, \, e_{2}] = e_{3}, \quad \,\,\, [e_{1}, \, e_{3}] =
-2e_{1},
\quad [e_{1}, \, e_{5}] = [e_{3}, \, e_{4}] = e_{4}\\
&& \left[ e_{2}, \, e_{3} \right] = 2e_{2}, \quad \left[e_{2}, \,
e_{4} \right] = e_{5}, \qquad \,\, \left[e_{3}, \, e_{5} \right] =
- e_{5}
\end{eqnarray*}
It was proven in \cite[Example 3.7]{am-sigma} that the derivation
given in matrix form by $\Delta := e_{11} - e_{41} - e_{22} +
e_{53} - e_{44} - 2\, e_{55}$ is not inner, where $e_{i\,j} \in
\mathcal{M}_{5}(k)$ is the matrix having $1$ in the $(i,j)^{th}$
position and zeros elsewhere. On the other hand, a straightforward
computation shows that $Z (\mathfrak{g}) = \{0\}$. By applying
Corollary~\ref{galperfc} it follows that the extension
$\mathfrak{g} \subseteq \mathfrak{g}_{(\Delta)}$ has trivial
Galois group $\{\rm Id_{\mathfrak{g}_{(\Delta)}}\}$.
\end{example}

\end{document}